\documentclass{article}
\usepackage{amsfonts, mathrsfs, amsmath, amsthm}
\usepackage{enumitem}
\usepackage{appendix}
\usepackage[section]{algorithm}
\usepackage{algpseudocode}
\usepackage{mathtools}
\setlength{\textwidth}{16cm}
\oddsidemargin=0.truecm
\evensidemargin=0.truecm
\pagestyle{plain}
\setlength{\topmargin}{-1.5cm}
\setlength{\textheight}{24cm}
\usepackage{color}

\newtheorem{theorem}{Theorem}[section]
\newtheorem{lemma}[theorem]{Lemma}
\newtheorem{proposition}[theorem]{Proposition}
\newtheorem{corollary}{Corollary}[section]

\newtheorem{assumption}{Assumption}[section]
\theoremstyle{definition}

\DeclareMathOperator{\dist}{dist}

\numberwithin{equation}{section}

\title{Weak approximation of stochastic differential equations with sticky boundary conditions }
\author{A. Sharma\thanks{Department of Mathematical Sciences, Chalmers University of Technology and University of Gothenburg, Sweden; akashs@chalmers.se}}

\date{}

\providecommand{\keywords}[1]
{
  \small	
  \noindent\textbf{\textit{Key words. }} #1
}

\providecommand{\msc}[1]
{
  \small	
  \noindent\textbf{\textit{AMS subject classifications.}} #1
}

\begin{document}

\maketitle
\begin{abstract}

Sticky diffusion models a Markovian particle experiencing reflection and temporary adhesion phenomena at the boundary. Numerous numerical schemes exist for approximating stopped or reflected stochastic differential equations (SDEs), but this is not the case for sticky SDEs. In this paper, we construct and analyze half-order and first-order numerical schemes for the weak approximation of stochastic differential equations with sticky boundary conditions. We present the algorithms in general setting such that they can  be used  to  solve general linear parabolic partial differential equations with second-order sticky boundary condition via the  probabilistic representations of their solutions. Since the sticky diffusion spends non-zero amount of time on boundary, it poses extra challenge in designing the schemes and obtaining their order of convergence. We support the theoretical results with numerical experiments.  
    
    \bigskip
 \keywords{sticky diffusion, second-order boundary condition, mesh-free Monte Carlo methods for parabolic partial differential equations, Feynman-Kac formula.}
    
    \vspace{8pt}

\msc{ 65C30, 60H35, 60H10, 37H10}  
\end{abstract}
\section{Introduction}

This paper investigates the numerical schemes to simulate sticky diffusion. The applications of sticky diffusion arise in biology where molecules can viscously diffuse near a cell membrane \cite{w19}; in epidemics where there is a constraint on concentration of the pathogen of being non-negative but can be sticky near zero \cite{w20}; in queue modeling, inventories process where sticky behavior arises as a limit of storage process \cite{w21}; in finance where SDEs with sticky boundary is a good candidate to model interest rates which can be sticky near zero \cite{w23,w22}.  As is also highlighted in \cite{nawaf_miranda20}, the modeling  of the dynamics of mesoscopic particles is effective with SDEs portraying sticky behavior and its numerical simulation can give fundamental physical insight helpful in studying common materials \cite{w26,w24,w25,w27, holmes2020simulating_stratification_sticky_particles}.

To write the sticky SDEs we need the following notation. We denote with $G \subset \mathbb{R}^{d}$ a bounded domain having sufficiently smooth boundary $\partial G$. Let $Q := (0,T)\times G$. Let $b : \bar{Q} \rightarrow \mathbb{R}^{d}$ and $\sigma : \bar{Q} \rightarrow \mathbb{R}^{d\times d}$.  Let $I_{G}(x)$ and $I_{\partial G}(z)$ denote the indicator functions for $x \in G$ and $z \in\partial G $, respectively. We denote $\nu(z)$ as the inward normal vector at $z \in \partial G$.  Let $\varrho(z)$ and $\mu(z)$ be the scalar valued functions defined on $\partial G$.
Then, sticky diffusion is governed by the following stochastic differential equations evolving on $\bar G$:
\begin{align}
    dX(s) &= I_{G}(X(s))b(s,X(s))ds + I_{G}(X(s))\sigma(s,X(s))dW(s) \nonumber \\ & \;\;\;\;\; + I_{\partial G}(X(s))\varrho(X(s))\nu(X(s))dL(s),\;\;\;s \geq t_{0}, \; X(t_{0}) = x, \label{alsde1} \\ 
    \mu(X(s))dL(s) &= I_{\partial G}(X(s))ds, \label{alsde2}
\end{align}    
where \vspace{-3pt}
\begin{enumerate}[label= (\roman*)]
 \item $W(s) = (W^{1}(s),\dots,W^{d}(s))^{\top}$ is $d-$dimensional Wiener process, defined on a filtered probability space $(\Omega,\mathscr{F},(\mathscr{F}_{s})_{s\geq 0},\mathbb{P})$, with bracket process $[W^{i},W^{j}](s) = \delta_{ij}s$, \vspace{-3pt}
    \item $L(s)$ is an $\mathcal{F}_{s}$ adapted non-decreasing process such that for all $t>0$, $L(t) = \int_{0}^{t}I_{\partial G}(X(s))dL(s)$, a.s. $L(s)$ is also known as \textit{local time of $X(s)$ on the boundary}. It increases only when $X(s)$ hits the boundary $\partial G$.
    
\end{enumerate}

Partial differential equations play a fundamental role in physics, biology, finance, and engineering. The expectation of functions of sticky diffusions can be associated with solutions to linear parabolic partial differential equations (PDEs) featuring sticky (second-order) boundary conditions which we now
 introduce below. To this end, we need the following notation. Let $a : \bar{Q} \rightarrow \mathbb{R}^{d\times d}$.  Let $g(t,x)$ and $c(t,x)$ be scalar valued function defined on $\bar{Q}$.  Let $\gamma(z)$ be a scalar valued function defined on $\partial G$. If we assume that $\gamma(z)$, $\varrho(z)$, $\mu(z) $ are sufficiently smooth, by Seeley's theorem, we can say that $\gamma(z)$, $\varrho(z)$ and $\mu(z)$ are restrictions of functions defined on whole of $\bar{G}$.  Also, $\varphi(x)$ is a scalar valued function defined on $\bar{G}$.
Consider the following parabolic partial differential equation:
\begin{align}
 \frac{\partial u}{\partial t}  +  \mathcal{A}u  + c(t,x)u + g(t,x)  &:= \frac{\partial u}{\partial t} + \frac{1}{2}\sum\limits_{i,j=1}^{d}a^{ij}(t,x)\frac{\partial^{2}u}{\partial x^{i}\partial x^{j}}  +  \sum\limits_{i=1}^{d}b^{i}(t,x)\frac{\partial u}{\partial x^{i}} \nonumber\\  & \;\;\; + c(t,x)u + g(t,x) = 0,\;\;\; (t,x) \in [0,T]\times \bar{G},  \label{wbp1}
\end{align}
with terminal condition
\begin{equation}
    u(T,x) = \varphi(x),\;\;\; x \in \bar{G},\label{wbp2}
\end{equation}
and sticky boundary condition
\begin{align}\label{wbp3}
     \mathscr{L}u : &= -\underbrace{\mu(z)\bigg(\frac{1}{2}\sum\limits_{i,j=1}^{d}a^{ij}(t,z)\frac{\partial^{2}u}{\partial z^{i}\partial z^{j}} + \sum\limits_{i=1}^{d}b^{i}(t,z)\frac{\partial u}{\partial z^{i}} \bigg)}_{\text{(I)}} + \underbrace{\varrho(z)\frac{\partial u}{\partial \nu}}_{\text{(II)}}  + \underbrace{\gamma(z)u}_{\text{(III)}}\nonumber \\ & 
    = \psi(t,z),\;\;\;\;\;\; (t,z) \in [0,T]\times\partial G,
\end{align}
where terms (I), (II), (III) represent  sticky, reflecting and absorbing phenomena, respectively. We recall some terminologies here. If (I) $\equiv 0 $ and $\varrho \geq \varrho_{0} > 0 $, then (\ref{wbp3}) reduces to Robin boundary condition. In addition, if (III) $\equiv 0$ then (\ref{wbp3}) becomes Neumann boundary condition. If (I) and (II) $\equiv 0$ then (\ref{wbp3}) is called as Dirichlet boundary condition provided $\gamma \neq 0$ for any $z \in \partial G$. We call $\mu$ as coefficient of stickiness.  The solution of (\ref{wbp1})-(\ref{wbp3}) can be represented via Feynman-Kac formula where the underlying Markov process is governed by SDEs \eqref{alsde1}-\eqref{alsde2}.  These equations describe diffusive particle dynamics in bounded domains where the boundary exhibits all three effects :  reflection, adhesion, and absorption.
Note that second-order differential operator $\mathcal{A}$ in the boundary condition should not be confused with second-order differential operator corresponding to tangential diffusion, hence no atlas machinery is required. 

 The computational method to simulate one dimensional sticky Brownian motion is discussed in \cite{nawaf_miranda20} and to approximate one dimensional sticky diffusion in \cite{w11}. For multi-dimensional case with $G$ being half-space i.e. $G= \{(x_1,\dots,x_d)\; ; \; x_d>0\}$ and functions $g $, $c$, $\varrho $, $\gamma$, $\psi$ being identically zero, first-order method is proposed in \cite{meier2023simulation}. This method is  based on continuous-time Markov chain whose transition probabilities are calculated via finite difference discretization of the generator of SDEs in half-space. This makes it computationally intensive since at each time step of each trajectory in Monte Carlo simulation one needs to estimate the transition probabilities by discretizing generator. This is not the case for random walk based probabilistic methods, which weakly approximate reflected, confined or stopped diffusions, and hence can be used to solve  Neumann (in general, Robin), specular or Dirichlet boundary value problems (e.g. see \cite{1, costantini_pachhiarotti_sartoretto98, 48,64,2,40, gobet2000weak_killed_diff, leimkuhler2023simplerandom, sharma2022random, leimkuhler2024numerical}).  The sticky diffusion case which provides probabilistic representation to the solution of (\ref{wbp1})-(\ref{wbp3}) is not considered in these references. 
 
 In this paper, we propose random walks for approximating SDEs (\ref{alsde1})-(\ref{alsde2}). 
 In contrast to instantaneous reflection, the sticky diffusion spends non-zero amount of time on the boundary (see (\ref{alsde2}) and (I) in (\ref{wbp3})) which is mirrored in the construction of first-order (see \textbf{Algorithm~\ref{sticky_euler_method}}) as well as half-order  (see \textbf{Algorithm~\ref{proj_euler_method_stk_diff_algo}}) numerical schemes. We present the  algorithms in the general setting for approximating linear parabolic PDEs with second-order sticky boundary conditions.

In Section~\ref{sticky_Sec_2}, we impose the assumptions needed for well-posedness and present the probabilistic representation of the sticky boundary value problem.  In Section~\ref{sticky_num_method_section}, we develop numerical methods to approximate the sticky SDEs in weak-sense. The algorithms (first-order as well as half-order) are presented in the general setting such that they can be employed to approximate sticky boundary value problem. In Section~\ref{sticky_conv_proof_sticky_euler}, we prove the convergence of first-order method which we call as  sticky Euler method. In Section~\ref{sticky_proj_euler_section}, we establish the convergence of half-order projected Euler method. In Section~\ref{sticky_sec_num_exp}, we present the findings of numerical experiments validating the theoretical results.

\section{Setting}\label{sticky_Sec_2}
In this paper, we employ the following functional spaces. Let $C^{\frac{p+\epsilon}{2},p+\epsilon}(\bar{Q})$  denote a Hölder space consisting of functions $u(t,x)$  whose partial derivatives 
$\frac{\partial^{i+|j|}u}{\partial t^{i} \partial x^{j_{1}}\cdots \partial x^{j_{d}}}$
(or 
$\frac{\partial^{|j|}u}{\partial x^{j_{1}}\cdots \partial x^{j_{d}}})$
satisfying $2i + |j| < p+ \epsilon$ (or $|j| < p+\epsilon$) are continuous on $\bar{Q}$ (or $\bar{G}$) with finite norm $|\cdot|_{Q}^{(p+\epsilon)}$ (or $|\cdot|_{G}^{(p+\epsilon)}$). Here, $i \in \mathbb{N}\cup \{0\}$, $p \in \mathbb{N}\cup \{0\}$, $0 <\epsilon < 1$, $j$ represents a multi-index, and $|\cdot|^{p+\epsilon}$ denotes the Hölder norm (see \cite[pp. 7-8]{3} for details). The notation $v(t,z) \in C^{p + \epsilon}(\bar{S})$ (or $v(z) \in C^{p+\epsilon}(\partial G)$)  will have the same meaning as explained above where $S:= (0,T) \times \partial G$.

 For $d$-dimensional vectors  $V_{j}$, $j =1,\dots,p$, 
we denote the $p$-th derivative of a smooth function $v(y)$ evaluated in
the directions  $V_{j}$ by $D^{p}v(y)[V_{1},\dots,V_{p}]$:
\begin{equation*}\label{notationequation}
  D^{p}v(y)[V_{1},\dots,V_{p}] =  \sum_{i_1,\dots,i_p=1}^{d}\frac{\partial^{p} }{\partial y^{i_{1}}\dots\partial y^{i_{p}}} v(y) \, \prod\limits_{j=1}^{p}V_{j}^{i_{j}}.
\end{equation*}
In particular, $Dv(x)[V] = (\nabla v \cdot V)$, where we denote the scaler product between two vectors $a, b \in \mathbb{R}^d$ as $(a \cdot b)$.
By $\dist(x, \partial G)$, we will denote the shortest distance of a point $x$ from the boundary $\partial G$.

\noindent \textbf{SDEs with boundary conditions.}
 To ensure the existence and uniqueness of solution of (\ref{alsde1})-(\ref{alsde2}), we need certain assumptions (see \cite{w3,w4,w2,w1,graham1988martingale}).
\begin{assumption}\label{boundary_assum} $ G$ is a bounded domain and $\partial G \in C^{4,\epsilon}$. 
\end{assumption}

\begin{assumption}\label{assump_para_coeff}
$b \in C^{1+\epsilon, 2+ \epsilon}(\bar{Q})$, $\sigma \in C^{1+\epsilon, 2+ \epsilon}(\bar{Q})$, $ \mu \in C^{ 2+ \epsilon}(\partial G) $ and  $ \varrho \in C^{ 2+ \epsilon}(\partial G) $.
\end{assumption} 

\begin{assumption}\label{sticky_assum} $ \varrho(z) > \varrho_0 > 0$ and $ \mu(z) > 0$.
\end{assumption}

The assumption $ \varrho(z) > 0$  is called as strong transversality condition as opposed to weak transversality $\inf\limits_{z \in \partial G}(\varrho(z) + \mu(z)) > 0 $. The existence and uniqueness will hold with weak transversality condition but for constructing numerical schemes we need the above assumption. 

\paragraph{Probabilistic representation of parabolic sticky boundary value problem.} We first impose assumptions with regards to boundary value problem (\ref{wbp1})-(\ref{wbp3}).
 
 \begin{assumption}\label{coeff_assum2}
 $c \in C^{1+\epsilon, 2+ \epsilon}(\bar{Q})$, $g \in C^{1+\epsilon, 2+ \epsilon}(\bar{Q})$, $ \gamma \in C^{1+\epsilon, 2+ \epsilon}(\bar{S})$, $ \psi \in C^{1+\epsilon, 2+ \epsilon}(\bar{S}) $ and $ \varphi \in C^{ 4+ \epsilon}(\partial G) $.
 \end{assumption}

\begin{assumption}\label{ellip_assum}
    We assume that a unique solution of sticky boundary value problem (\ref{wbp1})-(\ref{wbp3}) exists and belongs to $C^{4, \epsilon}(\bar{Q})$. 
\end{assumption}
These assumptions are standard in the literature on convergence proofs for random walk based PDEs solvers   \cite{mil_tretyakov_book, milstein2003simplest_dirichlet}.  The proof of existence and uniqueness of classical solution of elliptic PDEs with sticky boundary condition is provided in \cite{cattiaux1992} (assuming hypoellipticity of diffusion coefficient).

The probabilistic representations of solutions of  general linear Dirichlet and Robin boundary value problems are obtained in \cite{freidlin_book}. Below we obtain the probabilistic representation for sticky boundary value problem \eqref{wbp1}-\eqref{wbp3} following the same set of arguments. 
\begin{proposition}
    Let Assumptions~\ref{boundary_assum}-\ref{ellip_assum} hold. Then, the probabilistic representation of the boundary value problem \eqref{wbp1}-\eqref{wbp3} is given by
    \begin{align}
       u(t_0, x) =  \mathbb{E}\big( \varphi(X(s)) Y(s) + Z(s)\big),
    \end{align}
  where  $X(s)$ is from (\ref{alsde1})-(\ref{alsde2}) with $X(t_0) = x$, $a = \sigma \sigma^{\top}$, and  $Y(s)$ and $Z(s)$ are governed by
\begin{align} 
    &dY(s) = c(s,X(s))Y(s)ds + I_{\partial G}(X(s))\gamma(s,X(s))Y(s)dL(s),\;\;\; Y(t_{0}) = 1,\label{alsde3}\\
    &dZ(s) = g(s,X(s))Y(s)ds  - I_{\partial G}(X(s))\psi(s,X(s))Y(s)dL(s),\;\;\; Z(t_{0}) = 0.\label{alsde4}
\end{align}
\end{proposition}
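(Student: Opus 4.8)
The plan is to derive the representation via It\^o's formula applied to the process $s \mapsto u(s, X(s)) Y(s) + Z(s)$, exploiting that this composite process will turn out to be a martingale on $[t_0, T]$ once the PDE and the boundary condition are used to cancel the drift terms. Since $X(s)$ evolves differently in the interior $G$ and on the boundary $\partial G$ (governed respectively by the Brownian part with drift $b$ and by the local-time part $\varrho \nu \, dL$), the key is to split the It\^o expansion into an interior contribution, weighted by $I_G(X(s))$, and a boundary contribution, weighted by $I_{\partial G}(X(s))$, using \eqref{alsde1}, \eqref{alsde3}, \eqref{alsde4}, and the constraint \eqref{alsde2}.

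First I would write out $d\big(u(s,X(s))Y(s) + Z(s)\big)$ using the product rule and It\^o's formula for $u(s,X(s))$. On the interior-indicator part, the $ds$-terms assemble into $\big(\partial_t u + \mathcal{A}u\big)Y\,ds$ multiplied by $I_G(X(s))$, while the $Y$-equation contributes $c\,u\,Y\,ds$ and the $Z$-equation contributes $g\,Y\,ds$; invoking the PDE \eqref{wbp1}, namely $\partial_t u + \mathcal{A}u + cu + g = 0$, these interior $ds$-terms vanish identically. On the boundary-indicator part, every occurrence of $ds$ is replaced by $\mu(X(s))\,dL(s)$ through the sticky constraint \eqref{alsde2}, so the $dL(s)$-terms collect the quantity $\big[-\mu\,(\tfrac12 \sum a^{ij}\partial_{ij}u + \sum b^i \partial_i u) + \varrho\,\partial_\nu u + \gamma\,u - \psi\big]Y\,dL(s)$; here the first-order term $\varrho\,\partial_\nu u$ arises from the $\varrho\nu\,dL$ increment of $X$ contracted with $\nabla u$. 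By the boundary condition \eqref{wbp3}, $\mathscr{L}u = \psi$, precisely this bracket is zero, so the $dL(s)$-terms cancel as well. What remains is only the martingale increment $(\nabla u \cdot \sigma \, dW)\,Y$ from the interior diffusion.

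Next I would integrate from $t_0$ to $T$, take expectations, and use that the stochastic integral against $dW$ is a true martingale with zero expectation—this is where I would invoke the regularity hypotheses, namely $u \in C^{4,\epsilon}(\bar Q)$ from Assumption~\ref{ellip_assum} together with the boundedness of $\sigma$, $Y$, and the domain $G$, to guarantee the requisite square-integrability. This yields $\mathbb{E}\big(u(T,X(T))Y(T) + Z(T)\big) = u(t_0,x)Y(t_0) + Z(t_0) = u(t_0,x)$, since $Y(t_0)=1$ and $Z(t_0)=0$. Applying the terminal condition \eqref{wbp2}, $u(T,X(T)) = \varphi(X(T))$, gives exactly $u(t_0,x) = \mathbb{E}\big(\varphi(X(T))Y(T) + Z(T)\big)$, which is the claimed formula (the process is evaluated at the terminal time $s = T$).

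The main obstacle I anticipate is the careful handling of the indicator decomposition and the interplay of the two time scales $ds$ and $dL(s)$: one must justify that the It\^o expansion genuinely separates into the $I_G$- and $I_{\partial G}$-weighted pieces with no cross terms, and in particular that the quadratic-variation (second-order It\^o) contribution is supported only on the interior, since the local-time increment $dL$ has no quadratic variation against itself or against $dW$. A subtle point is that the process spends positive Lebesgue time on $\partial G$ (unlike instantaneously reflected diffusions), so the substitution $I_{\partial G}(X(s))\,ds = \mu(X(s))\,dL(s)$ from \eqref{alsde2} must be applied to rewrite the boundary drift in terms of $dL$ before the boundary condition \eqref{wbp3} can be matched; verifying the regularity needed for this generalized It\^o formula on $\bar G$—rather than merely asserting it—is the technical heart, and I would lean on the well-posedness framework cited in \cite{w3,w4,w2,w1,graham1988martingale} and the arguments of \cite{freidlin_book} to make it rigorous.
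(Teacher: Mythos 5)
Your proposal is correct and is essentially the paper's own argument: the paper likewise applies It\^o's formula to the composite process (in the equivalent form $u(s,X(s))e^{R(s)+P(s)}$ with $R(t)=\int_{t_0}^{t}c\,ds$ and $P(t)=\int_{t_0}^{t}\gamma\,dL$, so that $Y=e^{R+P}$), splits the generator term as $I_{G}\mathcal{A}u=\mathcal{A}u-I_{\partial G}\mathcal{A}u$, uses the constraint \eqref{alsde2} to convert the boundary $ds$-drift into the $-\mu\mathcal{A}u\,dL$ contribution (the same mechanism that produces the $-\mu\mathcal{A}u$ term in your boundary bracket, since \eqref{wbp1} holds on all of $\bar{G}$), cancels the $ds$- and $dL$-brackets using \eqref{wbp1} and \eqref{wbp3}, and concludes by taking expectations of the remaining stochastic-integral term and applying the terminal condition \eqref{wbp2}. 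Your version differs only cosmetically, in applying the product rule directly to $uY+Z$ with $Y,Z$ given by their SDEs, whereas the paper carries the $g$- and $\psi$-integrals separately and identifies $Y$ and $Z$ at the end.
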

\begin{proof}

Let $R(t) = \int_{t_0}^{t}c(s, X(s))ds$ and $P(t) = \int_{t_0}^{t}\gamma(s, X(s))d L(s)$. We apply Ito's formula on $u(t,x)e^{r + p}$ to get
\begin{align}
u(T,& X(T))e^{R(T) + P(T)} = u(t_0, x) + \int_{t_0}^{T}\Big(\frac{\partial u}{\partial s}(s, X(s)) + I_{ G}(X(s))\mathcal{A}  u(s, X(s)) \nonumber \\ & \;\;\; + c(s, X(s))u(s,X(s)) \Big)e^{R(s) + P(s)}ds   + \int_{t_0}^{T}e^{R(s) + P(s)}\big( \nabla u(s, X(s)) \cdot \sigma(s, X(s))dW(s)\big) \nonumber \\ & \;\;\; + \int_{t_0}^{T} \Big(\varrho(X(s))(\nu(X(s)) \cdot \nabla u(s, X(s)))   +  \gamma(s, X(s))u(s,X(s))\Big) e^{R(s) + P(s)} dL(s) \nonumber \\ & 
= u(t_0, x) + \int_{t_0}^{T}\Big(\frac{\partial u}{\partial s}(s, X(s)) + \mathcal{A}  u(s, X(s)) - I_{\partial G}(X(s)) \mathcal{A}u(s,X(s)) \nonumber \\ & \;\;\; + c(s, X(s))u(s,X(s)) \Big)e^{R(s) + P(s)}ds   + \int_{t_0}^{T}e^{R(s) + P(s)}\big(\nabla u(s, X(s)) \cdot \sigma(s, X(s))   dW(s)\big) \nonumber \\ & \;\;\; + \int_{t_0}^{T} \Big( \varrho(X(s))(\nu(X(s)) \cdot \nabla u(s, X(s)))   +  \gamma(s, X(s))u(s,X(s))\Big) e^{R(s) + P(s)} dL(s),
\end{align}
Using (\ref{alsde2}) and the fact that $G$ is a bounded domain, we can write
\begin{align*}
u(&T, X(T))e^{R(T) + P(T)} = 
u(t_0, x) + \int_{t_0}^{T}\Big(\frac{\partial u}{\partial s}(s, X(s)) + \mathcal{A}  u(s, X(s)) \nonumber \\ & \;\;\; + c(s, X(s))u(s,X(s)) \Big)e^{R(s) + P(s)}ds   + \int_{t_0}^{T}e^{R(s) + P(s)}\big(\nabla u(s, X(s)) \cdot \sigma(s, X(s))   dW(s)\big) \nonumber \\ & \;\;\; + \int_{t_0}^{T} \Big(- \mu(X(s))\mathcal{A}u(s,X(s)) + \varrho(X(s))(\nu(X(s)) \cdot \nabla u(s, X(s)))  \\  &  \;\;\;  +  \gamma(s, X(s))u(s,X(s))\Big) e^{R(s) + P(s)} dL(s),
\end{align*}
which on applying (\ref{wbp1})-(\ref{wbp3}) and taking expectation on both sides gives
\begin{align*}
 \mathbb{E}\varphi(X(T))Y(T) = u(t_{0}, x) + \mathbb{E}\Big(-\int_{t_{0}}^{T}g(s,X(s))Y(s)ds + \int_{t_{0}}^{T}\psi(s, X(s))Y(s)dL(s)\Big),
\end{align*}
where $Y(s) = e ^{R(s) + P(s)}$. Therefore, we have the following probabilistic representation:
\begin{equation}
    u(t_{0},x) = \mathbb{E}(\varphi(X(s))Y(s) + Z(s)),
\end{equation}
where $X(s)$ is from (\ref{alsde1})-(\ref{alsde2}), and $Y(s)$ and $Z(s)$ are from \eqref{alsde3}-\eqref{alsde4}.
\end{proof}


\section{Numerical Schemes}\label{sticky_num_method_section}
 We divide our discussion into two subsections. 
 In Subsection~\ref{subsection3.2}, we construct Markov chain for first-order approximation of sticky diffusion.  In Subsection~\ref{subsection3.1}, we introduce half-order method to simulate sticky diffusion with reflection. 
 
To be precise, the aim here is to construct a Markov chain $(t_{k}, X_{k}, Y_{k}, Z_{k})_{k > 0}$ approximating (\ref{alsde1})-(\ref{alsde2}) and (\ref{alsde3})-(\ref{alsde4}). For brevity, we will take $\varrho \equiv 1$, but the schemes can be generalized without any difficulty for general $\varrho$ provided Assumption~\ref{sticky_assum} holds.  Let $h \in (0, 1)$ be fixed. Let $\chi$ denote the number of steps which may be random. The Markov chain starts at $(t_{0}, X_{0}, Y_{0}, Z_{0})= (t_{0}, x, 1, 0)$.


\subsection{Sticky Euler method} \label{subsection3.2}

 Consider $X_0 $ belonging to $G$. Given $(t_k, X_k, Y_k, Z_k)$, the Markov chain takes auxiliary step which we denote as $(t_{k+1}^{'}, X_{k+1}^{'}, Y_{k+1}^{'}, Z_{k+1}^{'})$ with updating rules: 
\begin{align}
X_{k+1}^{'} &= X_{k } + hb(t_k, X_{k}) + h^{1/2}\sigma(t_k, X_k)\xi_{k+1},\label{sticky_eulerscheme_x}\\ 
t_{k+1}^{'} &= t_{k} + h, \label{sticky_eulerscheme_t}
\end{align}
and
\begin{align}
Y_{k+1}^{'} &= Y_{k} + hc(t_{k},X_{k})Y_{k},\label{sticky_eulerscheme_y} \\
Z_{k+1}^{'} &= Z_{k} + hg(t_{k},X_{k})Y_{k},\label{sticky_eulerscheme_z} 
\end{align}
where $\xi_{k+1} := \{ \xi_{k+1}^{1}, \dots, \xi_{k+1}^{d} \}$, $k = 0,\dots, \chi-1$, and each component of $\xi_{k+1}$ is independent and identically distributed taking values $\pm 1$ each with probability $1/2$. 
Based on the auxiliary steps mentioned above, the following four cases are considered. The first case considers when no boundary cross-over occurs and rest three cases take into account different scenarios when the boundaries of the cylinder $Q$ are crossed by  $(t_{k+1}^{'}, X_{k+1}^{'})$.

\begin{description}
     \item[\textbf{Case I} $t^{'}_{k+1} < T$ and $X^{'}_{k+1} \in \bar{G}$. ]
     If $X_{k+1}^{'} \in \bar{G}$ and $t_{k+1}^{'} < T$, then the updating rules are simply as follows:  
\begin{align}
X_{k+1} &= X_{k+1}^{'}, \;\;\; 
t_{k+1} = t_{k+1}^{'}, \label{sticky_eqn_3.5}\\ 
Y_{k+1} &= Y_{k+1}^{'}, \;\;\;
Z_{k+1} = Z_{k+1}^{'}. \label{sticky_eqn_3.6}
\end{align}
 \item[\textbf{Case II} $t^{'}_{k+1} \geq T$ and $X^{'}_{k+1} \in \bar{G}$. ]
 However, if $X_{k+1}^{'} \in \bar{G}$ and $t_{k+1}^{'} \geq T$ then we assign $X_{\chi} = X_{k+1}^{'}$ and $ t_\chi = T$. Also, $Y_{\chi} $ and $Z_{\chi}$ are computed according to (\ref{sticky_eulerscheme_y}) and (\ref{sticky_eulerscheme_z}).

 \item[\textbf{Case III} $t^{'}_{k+1} < T$ and $X^{'}_{k+1} \in \bar{G}^c$. ]
  In this case and next case, we have $X_{k+1}^{'} \in \bar{G}^c$.  Thanks to the bounded random variables and the boundedness of coefficients $b$ and $\sigma$, it is not difficult to deduce that $\dist(X_{k+1}^{'}, \partial G) = \mathcal{O}(h^{1/2})$.
   We calculate projection of $X_{k +1}^{'}$ on $\partial G $ and calculate the distance $r_{k+1} := \dist(X_{k+1}, \partial G)$ by solving
\begin{align}
X_{k+1}^{\pi} &= X_{k +1} + r_{k+1}\nu(X_{k+1}^{\pi}),\label{sym_proj_x}
\end{align}
where $X_{k+1}^{\pi}$ denotes the projection of $X_{k+1}$ on $\partial G $.  Due to Assumption~\ref{boundary_assum}, there exists a unique projection of $X_{k+1}^{'}$ on $\partial G$, for sufficiently small $h$, satisfying \eqref{sym_proj_x} (see \cite{2}).  

To portray the sticky behavior, an update rule is required for the temporal variable which will depend on $r_{k+1}$ and hence the update will be of order $\mathcal{O}(h^{1/2})$. This creates an issue at the final step i.e. if $t_{k+1}^{'}$ combined with the required update cross temporal boundary $\{T\}$. 
Hence, a boundary correction, in time variable, is needed to obtain the optimal first-order global convergence. Therefore, this case is subdivided into two cases based on the following auxiliary step in time:
 \begin{align}\label{checking_t''}
     t^{''}_{k+1} = t_{k+1}^{'} + 2 r_{k+1}\mu(X^{\pi}_{k+1}).
 \end{align}
 The above auxiliary step imitates the sticky behavior of continuous-time sticky dynamics. 
\begin{description}
    \item[ \textbf{Case IIIa} $t^{''}_{k+1} < T $. ] Then 
    \begin{align}\label{sticky_case3a_eqn_1}
        t_{k+1} = t^{''}_{k+1}, \quad \quad
        X_{k+1} = X_{k+1}^{'} + 2 r_{k+1}\nu(X^{\pi}_{k+1}). 
    \end{align}

\begin{figure}[H]
    \centering
\includegraphics[width=0.8\linewidth]{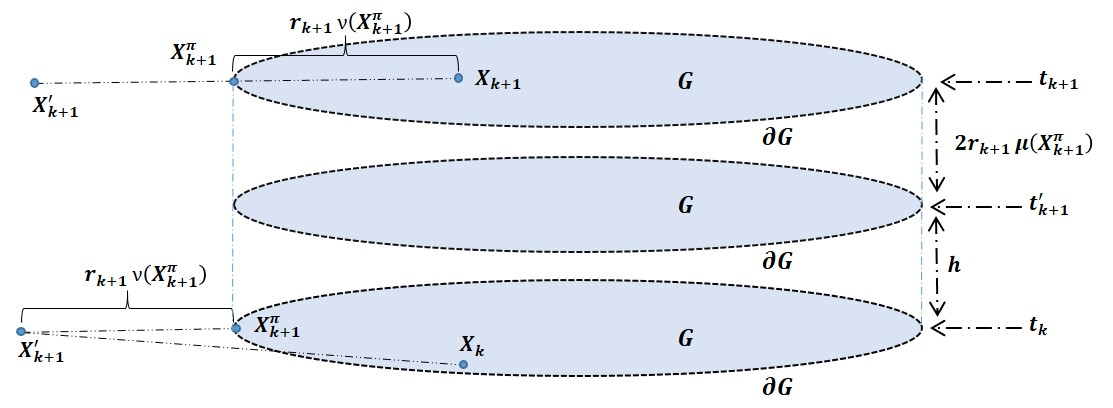}
    \caption{Schematic of one-step procedure (not to scale) when $X_{k+1}^{'} \in \bar{G}^c$. Here, we consider the cylinder $ G \times [t_k, t_{k+1}] $ depicting the movement of Markov chain from $(t_k, X_k)$ to $(t_{k+1}, X_{k+1})$. }
    \label{fig:placeholder}
\end{figure}    
 Denoting $t_{k+1/2}^{''} = t_{k+1}^{'} + r_{k+1} \mu(X^{\pi}_{k+1})$, the updating rules for $ Y_{k+1}$ and $Z_{k+1}$ are as follows:
\begin{align} 
Y_{k+1} &= Y_{k+1}^{'}  + 2r_{k+1}\gamma(t_{k+1/2}^{''}, X^{\pi}_{k})Y_{k} + 2r_{k+1}\mu(X_{k}^{\pi})c(t_{k+1/2}^{''}, X_{k}^{\pi})Y_{k}  
\nonumber  \\& \;\;\; 
+ 2r_{k+1}^{2}\mu(X_{k+1}^{\pi})\gamma^{2}(t_{k+1/2}^{''},X_{k+1}^{\pi})Y_{k} + 4r_{k+1}^{2}\mu(X_{k+1}^{\pi})\gamma(t_{k+1/2}^{''},X_{k}^{\pi}) c(t_{k+1/2}^{''},X_{k+1}^{\pi})Y_{k}  
\nonumber \\   &  \;\;\;   + 2r_{k+1}^{2}\mu^{2}(X_{k+1}^{\pi})c^{2}(t_{k+1/2}^{''},X_{k+1}^{\pi})Y_{k}  ,\label{sym_y} \\
Z_{k+1} &= Z_{k+1}^{'} -   2r_{k+1}\psi(t_{k+1/2}^{''}, X^{\pi}_{k+1})Y_{k} - 2 r_{k+1}^{2}\gamma(t_{k+1/2}^{''}, X^{\pi}_{k+1}) \psi(t_{k+1/2}^{''}, X^{\pi}_{k+1})Y_{k}
\nonumber \\& \;\;\;- 2r_{k+1}^{2}\mu( X^{\pi}_{k+1})c(t_{k+1/2}^{''}, X_{k+1}^{\pi}) \psi(t_{k+1/2}^{''}, X_{k+1}^{\pi})Y_k
\nonumber \\& \;\;\; + 2r_{k+1}\mu( X^{\pi}_{k+1})g(t_{k+1/2}^{''}, X^{\pi}_{k+1})Y_{k} 
+ 2r_{k+1}^2\mu(X^{\pi}_{k+1})\gamma(t_{k+1/2}^{''}, X^{\pi}_{k+1})g(t_{k+1/2}^{''}, X^{\pi}_{k+1})Y_k  \nonumber  \\& \;\;\;  +  2r_{k+1}^{2}(\mu( X^{\pi}_{k+1}))^{2}c(t_{k+1/2}^{''}, X^{\pi}_{k+1})g(t_{k+1/2}^{''}, X^{\pi}_{k+1})Y_k.\label{sym_z}
\end{align}
    \item[ \textbf{Case IIIb} $t^{''}_{k+1} \geq T $. ] Then 
    \begin{align}
        t_{\chi} &= T, \quad \quad 
        X_{\chi} = X_{k+1}^{'} + 2 r_{k+1}\nu(X^{\pi}_{k+1}),  \label{sticky_eqn_3.13}\\
        Y_{\chi} &=   Y_{k+1}^{'} + 2r_{k+1}\gamma(T, X^{\pi}_{k+1})Y_k -2 p_{k+1}\mu(X^{\pi}_{k+1}) c(T, X^{\pi}_{k+1}) Y_{k},\label{sticky_eqn_3.14}  \\
        Z_{\chi}  &= Z_{k+1}^{'} - 2 \mu(X^{\pi}_{k+1}) ( r_{k+1} - p_{k+1})\mathcal{A}\varphi(X^{\pi}_{k+1})Y_k \nonumber  \\  & \quad -  2 p_{k+1}\mu(X^{\pi}_{k+1}) g(T, X^{\pi}_{k+1})Y_{k} - 2r_{k+1}\psi(T, X^{\pi}_{k+1})Y_k, \label{sticky_eqn_3.15}
    \end{align}
    where 
    \begin{align}
        p_{k+1} = (T-t^{'}_{k+1})/(2\mu(X_{k+1}^{\pi})). \label{sticky_eqn_3.16}
    \end{align}
\end{description}

 \item[\textbf{Case IV} $t^{'}_{k+1} \geq T$ and $X^{'}_{k+1} \in \bar{G}^c$. ]

The transition rule for the subsequent step of the Markov chain is given by:
\begin{align}
    t_{\chi} &= T, \quad \quad 
    X_{\chi} = X^{'}_{k+1} + 2 r_{k+1}\nu(X^{\pi}_{k+1}), \label{stikcy_euler_eqn_3.16} \\ 
    Y_{\chi} &= Y_{k+1}^{'} + 2 r_{k+1}\gamma(T, X^{\pi}_{k+1})Y_k + 2 r_{k+1}^2\gamma^2(T, X^{\pi}_{k+1})Y_k, \label{sticky_euler_eqn_3.17}\\
    Z_{\chi} &= Z_{k+1}^{'} +[-2r_{k+1}\mu(X^{\pi}_{k+1})\mathcal{A}\varphi(X^{\pi}_{k+1})-2r^{2}_{k+1}\gamma(T, X^{\pi}_{k+1})\mu(X^{\pi}_{k+1})\mathcal{A}\varphi(X^{\pi}_{k+1}) \nonumber \\  &  \;\;\;\; - 2r_{k+1}\psi(T, X^{\pi}_{k+1})  - 2r_{k+1}\gamma(T, X^{\pi}_{k+1})\psi(T, X^{\pi}_{k+1})]Y_k. \label{sticky_euler_eqn_3.18}
\end{align}

\end{description}


\setlength{\abovecaptionskip}{5pt}

\begin{algorithm}[htbp]
\small  
\setlength{\baselineskip}{0.9\baselineskip}
\caption{Sticky Euler method}\label{sticky_euler_method}
\begin{algorithmic}
\Require $X_0 = x$,  $t_0$, $T$, $h$
\Ensure $X_0 \in G$, $k = 0$, $ t = t_0$, $Y_0 = 1$ and $Z_0 = 0$
\State running = \textbf{True}
\While{running}
\State Simulate $ \xi_{k+1} $ 
and compute $X^{'}_{k+1}$, $t_{k+1}^{'}$,  $Y_{k+1}^{'}$ and $Z_{k+1}^{'}$ according to \eqref{sticky_eulerscheme_x}-\eqref{sticky_eulerscheme_z}
\If{$t^{'}_{k+1} < T$ and $X_{k+1}^{'} \in \bar{G}$ }
    \State  $X_{k+1} = X^{'}_{k+1}$, $t_{k+1} = t_{k+1}^{'}$,   $Y_{k+1} =Y_{k+1}^{'}$, $Z_{k+1} = Z_{k+1}^{'}$.
    \State $k \gets k+1$.
\ElsIf{$t^{'}_{k+1} \geq T$ and $X_{k+1}^{'} \in \bar{G}$}
\State $\chi \gets k+1$
\State $t_\chi = T$, $X_{\chi} = X^{'}_{k+1}$, $Y_{\chi } = Y_{k+1}^{'}$, $Z_{\chi} = Z_{k+1}^{'}$. 
\State running = \textbf{False}
\ElsIf{ $t_{k+1}^{'} < T$ and $X_{k+1}^{'} \in \bar{G}^{c}$ }
    \State (i) Find $r_{k+1}$ and $X_{k+1}^{\pi}$ using (\ref{sym_proj_x})
    \State (ii) Take checking step $ t^{''}_{k+1}$ as in (\ref{checking_t''})
     \If{$ t^{''}_{k+1} < T$}
    \State Calculate $X_{k+1}$, $t_{k+1}$, $Y_{k+1}$, $Z_{k+1}$,  according to (\ref{sticky_case3a_eqn_1})-(\ref{sym_z})

    \Else
    \State $\chi \gets k+1$
      \State Calculate $t_{\chi}$, $X_{\chi}$, $Y_{\chi}$, $Z_{\chi}$ according to (\ref{sticky_eqn_3.13})-(\ref{sticky_eqn_3.15})
      \State running = \textbf{False}      
    \EndIf     
    \ElsIf{$t_{k+1}^{'} \geq T$ and $X_{k+1}^{'} \in \bar{G}^{c}$}
    \State $\chi \gets k+1$
    \State Compute $t_\chi$, $X_{\chi}$, $Y_{\chi}$ and $Z_{\chi}$ according to \eqref{stikcy_euler_eqn_3.16}-\eqref{sticky_euler_eqn_3.18} 
    \State running = \textbf{False}
\EndIf
\EndWhile  
\State \textbf{Output:} $ \chi, X_\chi, Y_\chi, Z_\chi$ 
\end{algorithmic}

\end{algorithm}
\setlength{\belowcaptionskip}{5pt}

 Straightforward symmetrized procedure along the boundary $\partial G$ does not work for approximation of sticky diffusion to obtain first-order of weak convergence as it works for instantaneously reflected diffusion (see \cite{2, leimkuhler2023simplerandom}). To deal with it, we introduce symmetry in spatial and temporal variables. In spatial variable, this symmetry is around the boundary point, i.e. $X_{k+1}^{\pi}$ and  in temporal variable around a fictitious point $t_{k+1/2}^{''}$. The approximating chain spends $\mathcal{O}(h^{1/2})$ (to be precise $2\mu(X_{k+1}^{\pi}) r_{k+1}$) amount of time on the boundary when $X_{k+1}^{'} \in \bar{G}^{c}$. This implies that number of steps are random and a correction procedure is needed at the final time to maintain first-order of convergence (see \textbf{Case IIIb} and \textbf{Case IV} above). This is in contrast to earlier symmetrized schemes \cite{2} (based on Gaussian random variables) and \cite{leimkuhler2023simplerandom} (based on bounded random variables) for instantaneously reflecting diffusions.  The updating rules for $Y_{k}$ and $Z_{k}$ are  derived in the manner which  ensures optimal order of convergence.  This results in a completely different numerical scheme for sticky diffusion, i.e. Algorithm~\ref{sticky_euler_method} as compared to Algorithm~2 in \cite{leimkuhler2023simplerandom} for reflected diffusion.

\subsection{Projected Euler method for sticky diffusion}\label{subsection3.1}
 We take $X_0 $ belonging to $G$. Given $X_k \in G$ and $t_k < T$, the update rules are
\begin{align}
X_{k+1} &= X_{k } + hb(t_k, X_{k}) + h^{1/2}\sigma(t_k, X_k)\xi_{k+1}, \;\;\; t_{k+1} = t_{k} + h,\label{stickyproj_eulerscheme_x}\\ 
Y_{k+1} &= Y_{k} + hc(t_{k},X_{k})Y_{k},\label{stickyproj_eulerscheme_y} \\
Z_{k+1} &= Z_{k} + hg(t_{k},X_{k})Y_{k},\label{stickyProj_eulerscheme_z} 
\end{align}
where $ \xi_{k+1} = (\xi_{k+1}^{1}, \dots, \xi_{k+1}^{d})$ and all $\xi_{k+1}^{i}$, $i=1,\dots,d$, $k = 1, \dots, \chi-1$, are independent and take values $\pm 1$ each with probability $1/2$.

Let $t_k < T$. If $X_{k} \in \bar{G}^{c}$, it implies in the previous step Markov chain moved out of $\bar{G}$.  We first calculate $r_k = \dist(X_k, \partial G)$. Given $X_k \in G^{c}$, the Markov chain takes steps as follows:
\begin{align}
X_{k+1} &= X_{k } + r_{k}\nu(X_{k+1}),\label{proj_x}\\ 
h_{k} &=  r_{k}\mu(X_{k+1}), \;\;\;\; 
t_{k+1} = t_{k} + h_{k}, \label{proj_t}\\
Y_{k+1} &= Y_{k} + r_{k}\gamma(t_{k},X_{k+1})Y_{k} + r_{k}\mu(X_{k+1})c(t_{k}, X_{k+1})Y_{k},\label{proj_y} \\
Z_{k+1} &= Z_{k} - r_{k}\psi(t_{k},X_{k+1})Y_{k} + r_{k}\mu(X_{k+1})g(t_{k}, X_{k+1})Y_{k},\label{proj_z}
\end{align}
where  $X_{k + 1}$ is the unique projection of $ X_k$ on $\partial G$ and $r_k =\mathcal{O}(h^{1/2})$ for all $k = 1,\dots,\chi-1$. The algorithm is written in more compact manner as Algorithm~\ref{proj_euler_method_stk_diff_algo}.

 The rate obtained for Algorithm~\ref{proj_euler_method_stk_diff_algo} is $1/2$  with average total number of steps of order $\mathcal{O}(h^{-1})$. Projection schemes have been proposed for simulating reflected and stopped diffusions (see \cite{costantini_pachhiarotti_sartoretto98} for half-order weak-method to simulate reflected diffusion with $\mu \equiv 0$, $g\equiv 0$ and $\varrho = 1$ ). Note that when $X_{k} \in \bar{G}^{c}$, $h_{k}$ is $\mathcal{O}(h^{1/2})$. This implies that there is non-zero probability that $t_{\chi} > T$, where $\chi$ denote number of step. This induces an error of $\mathcal{O}(h^{1/2})$ at the final step which does not disturb the rate of convergence.

\begin{algorithm}
\caption{Projected Euler method for sticky diffusion}\label{proj_euler_method_stk_diff_algo}
\small  
\setlength{\baselineskip}{0.9\baselineskip}
\begin{algorithmic}
\Require $X_0 = x$, $t_0$, $T$, $h$
\Ensure $X_0 \in G$, $k = 0$, $ t = t_0$,  $Y_0 = 1$ and $Z_0 = 0$

\While{$t \leq T$}
\State Simulate $ \xi_{k+1} $ 
\If{$X_k \in G$ }
   \State  Calculate $X_{k+1}$, $t_{k+1}$, $Y_{k+1}$ and $Z_{k+1}$  according to (\ref{sticky_eulerscheme_x})-(\ref{sticky_eulerscheme_t})
\Else
    \State (i) Find $r_k$ and $X_{k+1}$ using (\ref{proj_x})
    \State (ii) Calculate $t_{k+1}$, $Y_{k+1}$, $Z_{k+1}$ according to (\ref{proj_t})-(\ref{proj_z})
\EndIf
\State $t \gets t_{k+1}$
\State $k \gets k+1$
\EndWhile
\end{algorithmic}
\end{algorithm}

\section{Convergence of sticky Euler scheme} \label{sticky_conv_proof_sticky_euler}

\begin{theorem}\label{thrm_sticky_euler_conv}
Under Assumptions~\ref{boundary_assum}-\ref{ellip_assum}, the following inequality is satisfied for \textbf{Algorithm~\ref{sticky_euler_method}}:
\begin{align}
|\mathbb{E}( \varphi(X_{\chi})Y_{\chi} + Z_{\chi}) - u(t_{0}, X_{0})| \leq Ch, 
\end{align}
where $ u(t,x) $ is solution of (\ref{wbp1})-(\ref{wbp3}), and $C $ is a positive constant independent of $h$. 
\end{theorem}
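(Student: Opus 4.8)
The plan is to use the exact solution $u$ of \eqref{wbp1}-\eqref{wbp3} as a test function and to control the global error by a telescoping sum of conditional one-step errors. Introduce the functional $F_k := u(t_k, X_k)Y_k + Z_k$ along the Markov chain. By the terminal condition \eqref{wbp2} we have $F_\chi = \varphi(X_\chi)Y_\chi + Z_\chi$, while $F_0 = u(t_0, x)$, so that
\begin{equation*}
\mathbb{E}\big(\varphi(X_\chi)Y_\chi + Z_\chi\big) - u(t_0, X_0) = \mathbb{E}\sum_{k=0}^{\chi-1}\mathbb{E}\big(F_{k+1}-F_k \mid \mathscr{F}_{t_k}\big).
\end{equation*}
It then suffices to estimate the increment $\Delta_k := \mathbb{E}(F_{k+1}-F_k\mid \mathscr{F}_{t_k})$ in each case of Algorithm~\ref{sticky_euler_method} and to sum. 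Since the $\xi_{k+1}$ are bounded, one step moves $X_k$ by at most $\mathcal{O}(h^{1/2})$, so a cross-over (Cases III--IV) can only happen from the boundary layer $\{\dist(\cdot,\partial G)=\mathcal{O}(h^{1/2})\}$; this is what keeps the analysis free of tail contributions.

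For an interior step (Case I), $X_{k+1}$ is an ordinary weak Euler step. Taylor expanding $u(t_k+h, X_{k+1})$ and averaging over $\xi_{k+1}$, the moment conditions $\mathbb{E}\,\xi^i = \mathbb{E}\,(\xi^i)^3 = 0$, $\mathbb{E}\,\xi^i\xi^j = \delta_{ij}$ reproduce the generator, yielding $\mathbb{E}\,u(t_{k+1},X_{k+1}) = u + h(\partial_t u + \mathcal{A}u) + \mathcal{O}(h^2)$ with $a=\sigma\sigma^\top$. Combining this with $Y_{k+1}=Y_k(1+hc)$ and $Z_{k+1}=Z_k+hgY_k$ gives
\begin{equation*}
\Delta_k = h\big(\partial_t u + \mathcal{A}u + cu + g\big)(t_k,X_k)\,Y_k + \mathcal{O}(h^2) = \mathcal{O}(h^2),
\end{equation*}
the bracket vanishing by \eqref{wbp1}. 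As there are $\mathcal{O}(h^{-1})$ interior steps, their total contribution is $\mathcal{O}(h)$.

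The heart of the proof is the boundary step (Case IIIa); take $\varrho\equiv 1$ as in the schemes. Writing $z:=X^\pi_{k+1}$ and $r:=r_{k+1}=\mathcal{O}(h^{1/2})$, the rejected point $X'_{k+1}=z-r\nu(z)$ lies outside while the accepted point $X_{k+1}=X'_{k+1}+2r\nu(z)=z+r\nu(z)$ is its mirror image across $z$, and time is advanced by the sticky increment $2r\mu(z)$ about the fictitious midpoint $t''_{k+1/2}$. Expanding $u(t_{k+1},X_{k+1})$ and the rejected value $u(t'_{k+1},X'_{k+1})$ about $(t''_{k+1/2},z)$ and using the spatial symmetry, the even-order and tangential contributions cancel while the antisymmetric part produces the normal derivative $2r\,\partial_\nu u(t''_{k+1/2},z)$; the symmetry in $t$ about $t''_{k+1/2}$ plays the analogous role in time. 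The $\mathcal{O}(r)$ and $\mathcal{O}(r^2)$ terms carried by the tailored updates \eqref{sym_y}-\eqref{sym_z} inject exactly $2r\gamma u$, $2r\mu(cu+g)$, $-2r\psi$ together with the second-order cross terms required to match the expansion of the weight $Y_{k+1}$. Substituting $\mathcal{A}u = -\partial_t u - cu - g$ on $\partial G$ (legitimate since \eqref{wbp1} holds up to the boundary), these combine to reconstruct
\begin{equation*}
\mu(z)\big(\partial_t u + cu + g\big) + \partial_\nu u + \gamma u - \psi = 0 \quad\text{on }\partial G,
\end{equation*}
which is \eqref{wbp3} rewritten, leaving a residual of order $\mathcal{O}(r^3)=\mathcal{O}(h^{3/2})$. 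Because each boundary step advances the discrete local time by $\approx 2r=\Theta(h^{1/2})$ and the accumulated local time is $\mathcal{O}(1)$, the expected number of such steps is $\mathcal{O}(h^{-1/2})$, so their total contribution is $\mathcal{O}(h^{-1/2})\cdot\mathcal{O}(h^{3/2})=\mathcal{O}(h)$.

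Finally I would treat the terminal-time cases (II, IIIb, IV): a naive truncation of a boundary excursion straddling $\{t=T\}$ would lose $\mathcal{O}(h^{1/2})$, but the corrections in \eqref{sticky_eqn_3.14}-\eqref{sticky_eqn_3.16} and \eqref{sticky_euler_eqn_3.17}-\eqref{sticky_euler_eqn_3.18}, built from $p_{k+1}$ and $\mathcal{A}\varphi=\mathcal{A}u(T,\cdot)$, are designed so that the single last step still contributes only $\mathcal{O}(h)$. To pass from the per-step bounds to the global estimate I would establish the supporting a priori bounds: $\sup_k|Y_k|\le e^{CT}$ and $\sup_k|Z_k|\le C$ (from boundedness of $c,\gamma,g,\psi$ and of the total local time), $\mathbb{E}\,\chi=\mathcal{O}(h^{-1})$ with expected boundary-step count $\mathcal{O}(h^{-1/2})$, and then sum the conditional increments via the tower property together with a discrete Gronwall argument handling the random index $\chi$. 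The main obstacle is exactly the boundary step: verifying that the many $r$- and $r^2$-order terms in \eqref{sym_y}-\eqref{sym_z} conspire with the spatial and temporal symmetrization to reproduce the boundary condition with an $\mathcal{O}(h^{3/2})$ residual, since any leftover of order $h$ there, accumulated over $\mathcal{O}(h^{-1/2})$ visits, would degrade the global rate to $\mathcal{O}(h^{1/2})$.
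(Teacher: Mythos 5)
Your overall architecture agrees with the paper's: the telescoping sum of $u(t_k,X_k)Y_k+Z_k$, the $\mathcal{O}(h^2Y_k)$ interior one-step error via moment matching, and above all the Case~IIIa computation — symmetric Taylor expansion of $u(t_{k+1},X_{k+1})$ and $u(t'_{k+1},X'_{k+1})$ about $(t''_{k+1/2},X^\pi_{k+1})$, cancellation of the even-order terms, substitution of $\mathcal{A}u=-\partial_t u-cu-g$ from \eqref{wbp1}, and matching against the tailored updates \eqref{sym_y}--\eqref{sym_z} so that the boundary condition \eqref{wbp3} absorbs the leading terms — is exactly the content of the paper's Lemma~\ref{sticky_lemma_4.3}, and your rewritten boundary identity $\mu(\partial_t u+cu+g)+\partial_\nu u+\gamma u-\psi=0$ is correct.

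The genuine gap is in how you sum the boundary contributions. You argue: each boundary step advances the discrete local time by $\approx 2r=\Theta(h^{1/2})$, hence the expected number of boundary steps is $\mathcal{O}(h^{-1/2})$, hence total error $\mathcal{O}(h^{-1/2})\cdot\mathcal{O}(h^{3/2})=\mathcal{O}(h)$. This step fails: $r_{k+1}=\mathcal{O}(h^{1/2})$ is only an upper bound, the chain can graze the boundary with arbitrarily small $r_{k+1}$, and the only a priori bound on the number of crossings is $\chi\leq\lfloor T/h\rfloor=\mathcal{O}(h^{-1})$; multiplying that count by the worst-case per-step error $\mathcal{O}(h^{3/2})$ yields only $\mathcal{O}(h^{1/2})$ — precisely the degradation you flag as the danger. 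The repair is to retain the $r_{k+1}$-weighting in the one-step bound (Lemma~\ref{sticky_lemma_4.3} gives $Chr_{k+1}Y_k$, not $Ch^{3/2}Y_k$) and then prove that the weighted sum $\mathbb{E}\sum_k r_{k+1}Y_kI_{\bar G^c}(X'_{k+1})$ is $\mathcal{O}(1)$. This is what the paper's Lemma~\ref{sticky_lemma_avgreflection} and Corollary~\ref{sticky_lemma_avgreflection_corollary1} supply: after bounding $Y_k\leq e^{\tilde KT}\prod_i\big(1+Kr_iI_{\bar G^c}(X'_i)\big)$, they control $\mathbb{E}\sum_k r_kI_{\bar G^c}(X'_k)\prod_i(1+Kr_i)$ by exhibiting a supersolution $W(t,x)=e^{K_1(T-t)}e^{K_2B(x)}$, with $B$ a smooth extension of $\dist(\cdot,\partial G)$, to the discrete boundary value problem for the one-step transition operator, where the sticky time increment $2r\mu(x^\pi)$ produces the dominant negative term $-K_1r\mu(x^\pi)$ near the boundary. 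Your intuition that the accumulated discrete local time is $\mathcal{O}(1)$ can in fact be made rigorous pathwise for this scheme — each boundary step consumes $2\mu(X^\pi_{k+1})r_{k+1}$ of the time budget $T-t_0$, and $\mu\geq\mu_0>0$ on the compact $\partial G$, so $\sum_k r_{k+1}\leq (T-t_0)/(2\mu_0)$ a.s., which would also justify your otherwise unproven assertions $\sup_k Y_k\leq e^{CT}$ and $\sup_k|Z_k|\leq C$ — but it must be fed into the $r$-weighted sum of errors, not converted into a step count; as written, the counting argument is the one place where the proposal breaks.
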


We consider the one-step error analysis in Subsection~\ref{sticky_subsec_one_step_lemmas} and finally move to the proof of  Theorem~\ref{thrm_sticky_euler_conv} in Subsection~\ref{sticky_subsec_conv_proof}.

\subsection{One-step lemmas}\label{sticky_subsec_one_step_lemmas}
In this subsection, we analyze one-step error of Algorithm~\ref{sticky_euler_method} case by case. Lemma~\ref{sticky_lemma_4.2} considers Case~II, Lemma~\ref{sticky_lemma_4.3} and Lemma~\ref{sticky_lemma_4.4} examine Case~III and finally, Case~IV is investigated in Lemma~\ref{sticky_lemma_4.5}. Among these lemmas, the most crucial lemma is Lemma~\ref{sticky_lemma_4.3} for the global convergence theorem. For Case~I, we cite the standard result of one-step error of weak Euler scheme. 

\begin{lemma}\label{sticky_lemma_4.2}
    Let Assumptions~\ref{boundary_assum}-\ref{ellip_assum} hold, then we have the following one-step approximation:
\begin{align*}
|u_{k+1}Y_{k+1} + Z_{k+1} - u_{k+1}^{'}Y_{k+1}^{'} - Z_{k+1}^{'}   |I_{\bar{G}}(X_{k+1}^{'})I(t^{'}_{k+1} > T) \leq ChY_{k},\;\;\;\;\;a.s. 
\end{align*}
where $C > 0$ is independent of $h$. 
\end{lemma}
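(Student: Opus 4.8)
The plan is to work on the event where both indicators equal one, i.e.\ in \textbf{Case~II}, where $X_{k+1}^{'}\in\bar G$ and $t_{k+1}^{'}\geq T$, and to show that the only source of error is the mismatch in the time argument of $u$. On this event the scheme assigns $t_{k+1}=T$, $X_{k+1}=X_{k+1}^{'}$, $Y_{k+1}=Y_{k+1}^{'}$ and $Z_{k+1}=Z_{k+1}^{'}$. Writing $u_{k+1}=u(T,X_{k+1}^{'})$ and $u_{k+1}^{'}=u(t_{k+1}^{'},X_{k+1}^{'})$, the two $Z$ contributions cancel and the common factor $Y_{k+1}^{'}$ can be pulled out, so that
\[
u_{k+1}Y_{k+1}+Z_{k+1}-u_{k+1}^{'}Y_{k+1}^{'}-Z_{k+1}^{'}
=\big(u(T,X_{k+1}^{'})-u(t_{k+1}^{'},X_{k+1}^{'})\big)Y_{k+1}^{'}.
\]

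First I would control the time increment. Since the chain is still running at step $k$ we have $t_{k}<T$, and $t_{k+1}^{'}=t_{k}+h$ together with the Case~II hypothesis $t_{k+1}^{'}\geq T$ gives $0\leq t_{k+1}^{'}-T=(t_{k}-T)+h<h$. Next, because Assumption~\ref{ellip_assum} places $u$ in $C^{4,\epsilon}(\bar Q)$, the derivative $\partial u/\partial t$ is continuous on the compact set $\bar Q$ and hence bounded; a first-order Taylor expansion (equivalently, the mean value theorem) in the time variable then yields
\[
\big|u(T,X_{k+1}^{'})-u(t_{k+1}^{'},X_{k+1}^{'})\big|
\leq \sup_{\bar Q}\Big|\frac{\partial u}{\partial t}\Big|\,|t_{k+1}^{'}-T|\leq Ch .
\]

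It remains to absorb the factor $Y_{k+1}^{'}$ into $Y_{k}$. From \eqref{sticky_eulerscheme_y} we have $Y_{k+1}^{'}=(1+hc(t_{k},X_{k}))Y_{k}$; since $c$ is bounded by Assumption~\ref{coeff_assum2} and $h\in(0,1)$, the factor $1+hc(t_{k},X_{k})$ is positive and bounded for $h$ small, so positivity of the discrete weight propagates from $Y_{0}=1$ and $|Y_{k+1}^{'}|\leq C\,Y_{k}$. Combining the three estimates gives the claimed bound $Ch\,Y_{k}$ almost surely, with $C$ independent of $h$. I do not expect a genuine obstacle here: the estimate is a one-dimensional Taylor remainder in time, and the only points to verify carefully are the sign and size of $t_{k+1}^{'}-T$ and the positivity of $Y_{k}$, consistent with the paper's remark that the truly delicate case is Lemma~\ref{sticky_lemma_4.3} rather than this one.
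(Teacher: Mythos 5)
Your proposal is correct and follows essentially the same route as the paper's proof: a first-order Taylor (mean value) expansion of $u$ in the time variable, using $0\leq t_{k+1}^{'}-T<h$ and boundedness of $\partial u/\partial t$ on $\bar Q$, with $Y_{k+1}^{'}$ absorbed into $CY_k$ via \eqref{sticky_eulerscheme_y}. Your write-up merely makes explicit two details the paper leaves implicit (the size of $t_{k+1}^{'}-T$ and the bound $|Y_{k+1}^{'}|\leq CY_k$), which is fine.
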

\begin{proof}
    In this case, $X_{k+1} = X_{k+1}^{'}$, $Y_{k+1} = Y_{k+1}^{'}$ and $Z_{k+1} = Z_{k+1}^{'}$. Using Taylor's approximation, we have

\begin{align}
    (u(T, X_{k+1}) - u(t_{k+1}^{'}, X_{k+1})) Y_{k+1} =  -(t_{k+1}^{'} - T)\frac{\partial u}{\partial t}(t_{k+1}^{'} + \epsilon (T-t_{k+1}^{'}))Y_{k+1},
\end{align}
where $\epsilon \in (0,1)$. Therefore, we get
$
     (u(T, X_{k+1}) - u(t_{k+1}^{'}, X_{k+1})) Y_{k+1} = \mathcal{O}(h)Y_{k},
$
where $\mathcal{O}(h)$ does not depend on $Y_{k}$. This completes the proof. 
\end{proof}

 We introduce the following notation which we will use in rest of the proofs: $u_{k+1} = u(t_{k+1},X_{k+1})$, $u_{k}= u(t_{k},X_{k})$, $u_{k+1}^{\pi}=u(t_{k+1/2}^{''},X_{k+1}^{\pi})$, $u^{\prime}_{k+1} = u(t_{k+1}^{'},X_{k+1}^{'})$, $a_{k} =a(t_{k}, X_{k})$, $b_{k}= b(t_{k},X_{k})$, $c_{k}= c(t_{k},X_{k})$, $g_{k}= g(t_{k},X_{k})$, $\psi_{k+1}^{\pi} = \psi(t_{k+1/2}^{''}, X_{k+1}^{\pi})$, $\gamma_{k+1}^{\pi}= \gamma(t_{k+1/2}^{''},X_{k+1}^{\pi})$, $\nu_{k+1}^{\pi} = \nu(X_{k+1}^{\pi})$, $ \varphi^{\pi}_{k+1} = \varphi(X_{k+1}^{\pi}) $ and $\sigma_{k} = \sigma(t_{k}, X_{k})$. 
  Under Assumption~\ref{boundary_assum}, the solution $u(t,x) \in C^{2,4}(\bar{Q})$ can be extended to  a function $u(t,x) \in C^{2,4}(\bar{Q}\cup\bar{Q}_{-r})$ (see \cite[Proposition 1.17]{19}). We extend $\psi \in C^{2}(\bar{G})$ in the similar manner. These extensions will be used in proofs. Note that $u(t,x)$ and its derivatives are uniformly bounded for $(t,x)\in \bar{Q} \cup \bar{Q}_{-r}$.

For the sake of convenience, let us denote $(t_{k+1}^{'}, X_{k+1}^{'})$, $(t_{k+1}, X_{k+1})$ and $(t_{k+1/2}^{''}, X_{k+1}^{\pi})$ as $P_{k+1}^{'}$, $P_{k+1}$ and $P_{k+1}^{\pi}$, where $t_{k+ 1/2}^{''} =t_{k+1}^{'} + h_{k+1} =  t_{k+1}^{'} + \mu(X^{\pi}_{k+1})r_{k+1}$. Also, $\Delta P_{k+1} := (h_{k+1}, r_{k+1}\nu_{k+1}^{\pi})$ with $h_{k+1} = \mu^{\pi}_{k+1}r_{k+1}$. It is clear that $P_{k+1} = P_{k+1}^{\pi} + \Delta P_{k+1}$ and $ P_{k+1}^{\pi} = P_{k+1}^{'} + \Delta P_{k+1}$. To avoid any confusion in notations we emphasize $u_{k+1}^{\pi} = u(P_{k+1}^{\pi})  = u(t_{k+1/2}^{''}, X_{k+1}^{\pi})$.
\begin{lemma}\label{sticky_lemma_4.3}
Let Assumptions~\ref{boundary_assum}-\ref{ellip_assum} hold, then we have the following one-step approximation:
\begin{align*}
|u_{k+1}Y_{k+1} + Z_{k+1} - u_{k+1}^{'}Y_{k+1}^{'} - Z_{k+1}^{'}   |I_{\bar{G}^{c}}(X_{k+1}^{'})I(t^{'}_{k+1} \leq T)I(t^{''}_{k+1} \leq T) \leq Chr_{k+1}Y_{k},\;\; \;a.s.
\end{align*}
where $C > 0$ is independent of $h$.
\end{lemma}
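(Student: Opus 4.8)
The plan is to analyse the one-step increment of the weighted solution,
$E := u_{k+1}Y_{k+1}+Z_{k+1}-u_{k+1}'Y_{k+1}'-Z_{k+1}'$,
on the Case~IIIa event $\{X_{k+1}'\in\bar G^c,\ t_{k+1}'\le T,\ t_{k+1}''\le T\}$, and to exploit that at the continuous level $u(s,X(s))Y(s)+Z(s)$ is a local martingale whose increment over a sticky excursion vanishes once the PDE (\ref{wbp1}) and the boundary condition (\ref{wbp3}) are invoked. First I would record the exact space--time symmetry built into (\ref{sticky_case3a_eqn_1}): with $\Delta P_{k+1}=(\mu_{k+1}^\pi r_{k+1},\,r_{k+1}\nu_{k+1}^\pi)$ one has $P_{k+1}=P_{k+1}^\pi+\Delta P_{k+1}$ and $P_{k+1}'=P_{k+1}^\pi-\Delta P_{k+1}$, so $P_{k+1}$ and $P_{k+1}'$ are reflections of one another through the boundary point $P_{k+1}^\pi$. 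I would then split
$E=(u_{k+1}-u_{k+1}')Y_{k+1}'+u_{k+1}(Y_{k+1}-Y_{k+1}')+(Z_{k+1}-Z_{k+1}')$
and treat the three pieces separately, keeping in mind that $r_{k+1}=\mathcal{O}(h^{1/2})$ and $Y_{k+1}'=(1+hc_k)Y_k=Y_k+\mathcal{O}(h)Y_k$.

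For the first piece I would Taylor expand $u_{k+1}=u(P_{k+1}^\pi+\Delta P_{k+1})$ and $u_{k+1}'=u(P_{k+1}^\pi-\Delta P_{k+1})$ about $P_{k+1}^\pi$, using the $C^{2,4}$ extension of $u$ to $\bar Q\cup\bar Q_{-r}$ recalled before the lemma (this is precisely why the extension is needed, since $P_{k+1}'$ lies just outside $\bar G$). By the symmetry every even-order term, in particular the whole second-derivative contribution, cancels in the difference, leaving
\[
u_{k+1}-u_{k+1}'=2\big(\mu_{k+1}^\pi r_{k+1}\,\partial_t u_{k+1}^\pi+r_{k+1}\,\partial_\nu u_{k+1}^\pi\big)+\mathcal{O}(r_{k+1}^3).
\]
Since $P_{k+1}^\pi$ lies on $\partial G$, I would substitute $\partial_t u=-(\mathcal{A}u+cu+g)$ from (\ref{wbp1}) and $\partial_\nu u=\psi+\mu\mathcal{A}u-\gamma u$ from (\ref{wbp3}) (with $\varrho\equiv1$), which collapses the linear term to $r_{k+1}\big[\psi_{k+1}^\pi-(\mu_{k+1}^\pi c_{k+1}^\pi+\gamma_{k+1}^\pi)u_{k+1}^\pi-\mu_{k+1}^\pi g_{k+1}^\pi\big]$. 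Thus $(u_{k+1}-u_{k+1}')Y_{k+1}'$ equals twice this quantity times $Y_k$, up to $\mathcal{O}(hr_{k+1})Y_k+\mathcal{O}(r_{k+1}^3)Y_k$.

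Next I would expand the corrections from (\ref{sym_y})--(\ref{sym_z}) as $Y_{k+1}-Y_{k+1}'=\Delta Y^{(1)}+\Delta Y^{(2)}$ and $Z_{k+1}-Z_{k+1}'=\Delta Z^{(1)}+\Delta Z^{(2)}$, with the $\mathcal{O}(r_{k+1})$ parts $\Delta Y^{(1)}=2r_{k+1}(\gamma^\pi+\mu^\pi c^\pi)Y_k$ and $\Delta Z^{(1)}=2r_{k+1}(\mu^\pi g^\pi-\psi^\pi)Y_k$, the remaining terms being $\mathcal{O}(r_{k+1}^2)$. Collecting the $\mathcal{O}(r_{k+1})$ contributions of all three pieces (using $u_{k+1}=u_{k+1}^\pi+\mathcal{O}(r_{k+1})$ in $u_{k+1}\Delta Y^{(1)}$), the bracketed factors add to zero, which is the discrete image of the martingale identity. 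The decisive point is then the $\mathcal{O}(r_{k+1}^2)=\mathcal{O}(h)$ level: because the symmetric cancellation already removed the second-derivative term, the only surviving second-order contributions are the cross term pairing the linear part of $u_{k+1}$ with $\Delta Y^{(1)}$, the quadratic correction $u_{k+1}^\pi\,\Delta Y^{(2)}$, and $\Delta Z^{(2)}$. Substituting the simplified linear term from the previous step and using that the quadratic coefficients in (\ref{sym_y})--(\ref{sym_z}) are exactly the second-order Taylor coefficients of the exact boundary flow (over the sojourn $Y$ is multiplied by $e^{(\mu^\pi c^\pi+\gamma^\pi)\Delta L}$ and $Z$ accumulates $\int(\mu^\pi g^\pi-\psi^\pi)Y\,dL$ with $\Delta L=2r_{k+1}$), these three groups cancel as well.

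The hard part is this second cancellation: an uncancelled $\mathcal{O}(r_{k+1}^2)=\mathcal{O}(h)$ term would overwhelm the target $\mathcal{O}(hr_{k+1})=\mathcal{O}(h^{3/2})$ estimate, so nothing may be discarded at order $h$, and this is exactly why both the space--time symmetrization about $P_{k+1}^\pi$ (which annihilates the second-derivative term) and the precise quadratic corrections in the $Y,Z$ updates are indispensable. Once both cancellations hold, the residual is $\mathcal{O}(r_{k+1}^3)+\mathcal{O}(hr_{k+1})$, and since $r_{k+1}^3\le Ch\,r_{k+1}$ from $r_{k+1}=\mathcal{O}(h^{1/2})$ while every correction carries an explicit factor $Y_k$, the almost sure bound $|E|\le Ch\,r_{k+1}Y_k$ follows. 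A routine loose end to tidy up is that the frozen coefficients $b_k,c_k$ of the auxiliary step and those evaluated at $P_{k+1}^\pi$ differ by $\mathcal{O}(h^{1/2})$, since $|(t_k,X_k)-P_{k+1}^\pi|=\mathcal{O}(h^{1/2})$; by the smoothness in Assumptions~\ref{assump_para_coeff} and \ref{coeff_assum2} these discrepancies always appear multiplied by a factor of order $r_{k+1}$ or $h$ and are absorbed into the $\mathcal{O}(hr_{k+1})$ remainder.
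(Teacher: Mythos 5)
Your proposal is correct and follows essentially the same route as the paper: a symmetric Taylor expansion of $u$ about the boundary point $P_{k+1}^{\pi}$ (using the $C^{2,4}$ extension), the product decomposition $u_{k+1}Y_{k+1}-u_{k+1}^{'}Y_{k+1}^{'}=(u_{k+1}-u_{k+1}^{'})Y_{k+1}^{'}+u_{k+1}(Y_{k+1}-Y_{k+1}^{'})$ (the paper weights the pieces as $(u_{k+1}-u_{k+1}^{'})Y_{k+1}+u_{k+1}^{'}(Y_{k+1}-Y_{k+1}^{'})$, which is the same algebra), substitution of \eqref{wbp1} and \eqref{wbp3}, and the two-level cancellation at orders $r_{k+1}$ and $r_{k+1}^{2}$ against the corrections in \eqref{sym_y}--\eqref{sym_z}, leaving exactly the residual $\mathcal{O}(r_{k+1}^{3}+hr_{k+1})=\mathcal{O}(hr_{k+1})Y_k$ that the paper obtains. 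One incidental remark: your identification of the quadratic corrections as the second-order Taylor coefficients of the boundary flow $e^{(\gamma+\mu c)\Delta L}$, $\Delta L=2r_{k+1}$, agrees with the expansion actually used inside the paper's proof (where the first quadratic term appears as $2r_{k+1}^{2}(\gamma_{k+1}^{\pi})^{2}Y_{k}$), which suggests the factor $\mu(X_{k+1}^{\pi})$ multiplying $\gamma^{2}$ in the displayed update rule \eqref{sym_y} is a typo.
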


\begin{proof}

 Using Taylor's expansion, we have
\begin{align}
    u_{k+1} = u(P_{k+1}) &= u^{\pi}_{k+1} + Du_{k+1}^{\pi}[\Delta P_{k+1}] + \frac{1}{2}D^{2}u_{k+1}^{\pi}[\Delta P_{k+1},\Delta P_{k+1} ] \nonumber \\ & \;\;\; +\frac{1}{6} D^{3}u(P_{k+1} + \epsilon_{1}\Delta P_{k+1})[\Delta P_{k+1},\Delta P_{k+1}, \Delta P_{k+1} ],\label{wbpeq4.9}\\ 
    u_{k+1}^{'} = u(P_{k+1}^{'}) &= u_{k+1}^{\pi} - Du_{k+1}^{\pi}[\Delta P_{k+1}]+  \frac{1}{2}D^{2}u_{k+1}^{\pi}[\Delta P_{k+1},\Delta P_{k+1} ] \nonumber \\ & \;\;\; - \frac{1}{6}D^{3}u(P_{k+1} + \epsilon_{2}\Delta P_{k+1})[\Delta P_{k+1},\Delta P_{k+1}, \Delta P_{k+1} ], \label{wbpeq4.10}
\end{align}
where $\epsilon_{1}, \epsilon_{2} \in (0,1)$ and $D = (\partial/\partial t, \nabla)$. Subtracting (\ref{wbpeq4.10}) from (\ref{wbpeq4.9}), we get
\begin{align}
u_{k+1} - u_{k+1}^{'} &= 2 Du(P_{k+1}^{\pi})[\Delta P_{k+1}]+\frac{1}{6} D^{3}u(P_{k+1} + \epsilon_{1}\Delta P_{k+1})[\Delta P_{k+1},\Delta P_{k+1}, \Delta P_{k+1}] \nonumber \\ & \;\;\; + \frac{1}{6}D^{3}u(P_{k+1} + \epsilon_{2}\Delta P_{k+1})[\Delta P_{k+1},\Delta P_{k+1}, \Delta P_{k+1} ].  
\end{align}
Recall $h_{k+1} = \mu_{k+1}^{\pi} r_{k+1} $. Using (\ref{sym_y}) and \eqref{wbp1} with above expressions, we get
\begin{align}
(u_{k+1} - u_{k+1}^{'})Y_{k+1} &= \Big(2 h_{k+1}\frac{\partial u_{k+1}^{\pi}}{\partial t} + 2r_{k+1}(\nu_{k+1}^{\pi} \cdot \nabla u_{k+1}^{\pi}) + \mathcal{O}(r_{k+1}^{3})\Big) \Big(Y_{k}+ hc_kY_k \nonumber \\ &  \;\;\; + 2r_{k+1}\gamma_{k+1}^{\pi}Y_{k}   + 2r_{k+1}\mu_{k+1}^{\pi}c^{\pi}_{k+1}Y_{k} + 2r_{k+1}^{2}(\gamma_{k+1}^{\pi})^{2}Y_{k}  \nonumber  \\& \;\;\; + 4r_{k+1}^{2}\mu_{k+1}^{\pi}\gamma_{k+1}^{\pi} c_{k+1}^{\pi}Y_{k} + 2r_{k+1}^{2}(\mu_{k+1}^{\pi})^{2}(c_{k+1}^{\pi})^{2}Y_{k} \Big)
\nonumber \\ &  
= 2h_{k+1}\frac{\partial u_{k+1}^{\pi}}{\partial t}Y_{k} + 2r_{k+1}(\nu_{k+1}^{\pi} \cdot \nabla u_{k+1}^{\pi})Y_{k} + 4h_{k+1}r_{k+1}\gamma_{k+1}^{\pi}\frac{\partial u_{k+1}^{\pi}}{\partial t}Y_{k} 
\nonumber \\ & \;\;\;
+ 4r_{k+1}^{2}\gamma_{k+1}^{\pi} (\nu_{k+1}^{\pi} \cdot \nabla  u_{k+1}^{\pi}) Y_{k}   + 4h_{k+1}r_{k+1}\mu_{k+1}^{\pi}c_{k+1}^{\pi}\frac{\partial u_{k+1}^{\pi}}{\partial t}Y_{k} 
\nonumber \\ & \;\;\;
+ 4r_{k+1}^{2}\mu_{k+1}^{\pi}c_{k+1}^{\pi}(\nu_{k+1}^{\pi}\cdot \nabla u_{k+1}^{\pi})Y_{k} +\mathcal{O}(r_{k+1}^{3}Y_{k})
\nonumber  \\ & =
  \Big(-2h_{k+1}\mathcal{A}u_{k+1}^{\pi}  + 2r_{k+1}(\nu_{k+1}^{\pi}\cdot \nabla u_{k+1}^{\pi}) - 4h_{k+1}r_{k+1}\gamma_{k+1}^{\pi}\mathcal{A}u_{k+1}^{\pi} \nonumber  \\&  \;\;\; +  4r_{k+1}^{2}\gamma_{k+1}^{\pi} (\nu_{k+1}^{\pi} \cdot \nabla u_{k+1}^{\pi}) - 2h_{k+1}c_{k+1}^{\pi}u_{k+1}^{\pi} - 2h_{k+1}g_{k+1}^{\pi} \nonumber  \\&  \;\;\;  - 4h_{k+1}r_{k+1}\gamma_{k+1}^{\pi}c^{\pi}_{k+1}u_{k+1}^{\pi} - 4h_{k+1}r_{k+1}\gamma_{k+1}^{\pi}g^{\pi}_{k+1} - 4h_{k+1}r_{k+1}\mu_{k+1}^{\pi}c_{k+1}^{\pi} \mathcal{A}u_{k+1}^{\pi} \nonumber  \\&  \;\;\;  - 4h_{k+1}r_{k+1}\mu_{k+1}^{\pi}(c_{k+1}^{\pi})^{2}u_{k+1}^{\pi} - 4h_{k+1}r_{k+1}\mu_{k+1}^{\pi}c^{\pi}_{k+1}g^{\pi}_{k+1} \nonumber  \\& \;\;\; + 4r_{k+1}^{2}\mu_{k+1}^{\pi}c_{k+1}^{\pi}(\nu^{\pi}_{k+1}\cdot \nabla u_{k+1}^{\pi}) \Big)Y_{k}  
  + \mathcal{O}(r_{k}^{3}Y_{k}).\label{ref_eqn4.14}
\end{align}

Again applying (\ref{sym_y}), we ascertain
\begin{align}
u_{k+1}^{'}(Y_{k+1} &- Y_{k+1}^{'}) = \Big(u_{k+1}^{\pi} - h_{k+1}\frac{\partial u_{k+1}^{\pi}}{\partial t} - r_{k+1}(\nu_{k+1}^{\pi} \cdot\nabla u_{k+1}^{\pi}) + \mathcal{O}(r_{k+1}^{2})\Big) \Big( 2r_{k+1}\gamma_{k+1}^{\pi}Y_{k} \nonumber  \\  & 
\;\;\;\; + 2r_{k+1}\mu_{k+1}^{\pi}c^{\pi}_{k+1}Y_{k}  + 2r_{k+1}^{2}(\gamma_{k+1}^{\pi})^{2}Y_{k}  \nonumber  \\& \;\;\;\; + 4r_{k+1}^{2}\mu_{k+1}^{\pi}\gamma_{k+1}^{\pi} c_{k+1}^{\pi}Y_{k} + 2r_{k+1}^{2}(\mu_{k+1}^{\pi})^{2}(c_{k+1}^{\pi})^{2}Y_{k} \Big) \nonumber \\&  = 
\Big(u_{k+1}^{\pi} + h_{k+1} \mathcal{A}u_{k+1}^{\pi} + h_{k+1}c_{k+1}^{\pi}u_{k+1}^{\pi} + h_{k+1}g_{k+1}^{\pi} - r_{k+1}(\nu_{k+1}^{\pi} \cdot\nabla u_{k+1}^{\pi}) + \mathcal{O}(r_{k+1}^{2})\Big)
\nonumber \\  &   \;\;\; \times \Big( 2r_{k+1}\gamma_{k+1}^{\pi}Y_{k}  +2r_{k+1}\mu_{k+1}^{\pi}c_{k+1}^{\pi}Y_{k}   + 2r_{k+1}^{2}(\gamma_{k+1}^{\pi})^{2}Y_{k} \nonumber  \\& \;\;\; +  4r_{k+1}^{2}\mu_{k+1}^{\pi}\gamma_{k+1}^{\pi} c_{k+1}^{\pi}Y_{k} + 2r_{k+1}^{2}(\mu_{k+1}^{\pi})^{2}(c_{k+1}^{\pi})^{2}Y_{k} \Big) \nonumber \\
& = \Big( 2r_{k+1}\big[\gamma_{k+1}^{\pi}u_{k+1}^{\pi} + \mu^{\pi}_{k+1}c^{\pi}_{k+1}u^{\pi}_{k+1}\big] + 2 h_{k+1}r_{k+1}\big[\gamma_{k+1}^{\pi} \mathcal{A}u_{k+1}^{\pi} + \gamma_{k+1}^{\pi}c_{k+1}^{\pi}u_{k+1}^{\pi}  
\nonumber  \\ & \;\;\;\; + \gamma_{k+1}^{\pi}g_{k+1}^{\pi}   +\mu_{k+1}^{\pi}c_{k+1}^{\pi}\mathcal{A}u_{k+1}^{\pi}+\mu_{k+1}^{\pi}(c_{k+1}^{\pi})^{2}u_{k+1}^{\pi} 
+\mu_{k+1}^{\pi} c^{\pi}_{k+1} g_{k+1}^{\pi}\big] \nonumber 
\\  & \;\;\;\; 
+ 2r_{k+1}^{2}\big[ - \gamma_{k+1}^{\pi}(\nu_{k+1}^{\pi}\cdot \nabla  u_{k+1}^{\pi}) - \mu_{k+1}^{\pi}c_{k+1}^{\pi}(\nu_{k+1}^{\pi} \cdot \nabla u_{k+1}^{\pi}) +  (\gamma_{k+1}^{\pi})^{2}u_{k+1}^{\pi}  \nonumber  \\ & \;\;\;   + 2\mu_{k+1}^{\pi}\gamma_{k+1}^{\pi}c_{k+1}^{\pi} u_{k+1}^{\pi}  
 + (\mu_{k+1}^{\pi})^{2}(c_{k+1}^{\pi})^{2}u_{k+1}^{\pi}\big] \Big)Y_{k} + \mathcal{O}(r_{k+1}^{3}Y_{k}). \label{ref_eqn4.15}
\end{align}
Adding (\ref{ref_eqn4.14}) and (\ref{ref_eqn4.15}), and using the fact that $h_{k+1} = \mu_{k+1}^{\pi} r_{k+1}$, we get
\begin{align}
u_{k+1}Y_{k+1} &- u_{k+1}^{'}Y_{k+1}^{'} = 2r_{k+1}\Big(-\mu_{k+1}^{\pi}\mathcal{A}u_{k+1}^{\pi}+ (\nu_{k+1}^{\pi} \cdot \nabla u_{k+1}^{\pi}) + \gamma_{k+1}^{\pi}u_{k+1}^{\pi}\Big)Y_{k} 
\nonumber \\  &  \;\;\;\;  + 2r_{k+1}^{2}\gamma_{k+1}^{\pi}\Big(-\mu_{k+1}^{\pi}\mathcal{A}u_{k+1}^{\pi} + (\nu_{k+1}^{\pi}\cdot \nabla u_{k+1}^{\pi})  + \gamma_{k+1}^{\pi}u_{k+1}^{\pi}\Big)Y_{k} 
\nonumber  \\ & \;\;\; 
+ 2r_{k+1}^{2}\mu_{k+1}^{\pi}c_{k+1}^{\pi}\Big(- \mu_{k+1}^{\pi}\mathcal{A}u_{k+1}^{\pi} + (\nu_{k+1}^{\pi}\cdot \nabla u_{k+1}^{\pi}) + \gamma_{k+1}^{\pi}u_{k+1}^{\pi}\Big)Y_{k}  \nonumber \\ & \;\;\;  - 2r_{k+1}\mu_{k+1}^{\pi}g_{k+1}^{\pi} - 2(r_{k+1})^{2}\mu_{k+1}^{\pi}\gamma_{k+1}^{\pi}g_{k+1}^{\pi} - 2r_{k+1}^{2}(\mu_{k+1}^{\pi})^{2}c_{k+1}^{\pi}g_{k+1}^{\pi} \nonumber \\ & 
= \big(2r_{k+1}\psi_{k+1}^{\pi} + 2 r_{k+1}^{2}\gamma_{k+1}^{\pi} \psi_{k+1}^{\pi} + 2r_{k+1}^{2}\mu_{k+1}^{\pi}c_{k+1}^{\pi} \psi_{k+1}^{\pi} - 2r_{k+1}\mu_{k+1}^{\pi}g_{k+1}^{\pi} 
\nonumber 
\\ &  \;\;\; - 2r_{k+1}^{2}\mu_{k+1}^{\pi}\gamma_{k+1}^{\pi}g_{k+1}^{\pi} - 2r_{k+1}^{2}(\mu_{k+1}^{\pi})^{2}c_{k+1}^{\pi}g_{k+1}^{\pi}\big)Y_k + \mathcal{O}(Y_k r^{3}_{k+1}).
\end{align}
Using the relation \eqref{sym_z} i.e. 
\begin{align*}
    Z_{k+1} &= Z_{k+1}^{'} -   2r_{k+1}\psi^{\pi}_{k+1}Y_{k} - 2 r_{k+1}^{2}\gamma^{\pi}_{k+1} \psi^{\pi}_{k+1}Y_{k}
\nonumber \\& \;\;\;- 2r_{k+1}^{2}\mu^{\pi}_{k+1}c^{\pi}_{k+1} \psi^{\pi}_{k+1} + 2r_{k+1}\mu^{\pi}_{k+1}g^{\pi}_{k+1}Y_{k} 
\nonumber \\& \;\;\;+ 2r_{k+1}^2\mu^{\pi}_{k+1}\gamma^{\pi}_{k+1}g^{\pi}_{k+1}    +  2r_{k+1}^{2}(\mu^{\pi}_{k+1})^{2}c^{\pi}_{k+1}g^{\pi}_{k+1},
\end{align*}
we get the desired result. 
\end{proof}

The following lemma considers the Case IIIb discussed in Subsection~\ref{subsection3.2}. 

\begin{lemma}\label{sticky_lemma_4.4}
Let Assumptions~\ref{boundary_assum}-\ref{ellip_assum} hold, then we have the following one-step approximation:
\begin{align*}
|u_{k+1}Y_{k+1} + Z_{k+1} - u_{k+1}^{'}Y_{k+1}^{'} - Z_{k+1}^{'}   |I_{\bar{G}^{c}}(X_{k+1}^{'})I(t^{''}_{k+1} > T) \leq ChY_{k}I(t^{''}_{k+1} > T) = ChY_{\chi-1}, \;\;a.s.
\end{align*}
where $C > 0$ is independent of $h$.
\end{lemma}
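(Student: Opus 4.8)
The plan is to collapse the one-step error onto the PDE \eqref{wbp1} and the sticky boundary condition \eqref{wbp3} evaluated at the single boundary point $(T,X^{\pi}_{k+1})$, exploiting that in this case the terminal time is actually reached. First I would invoke the terminal condition \eqref{wbp2}: since Case~IIIb sets $t_{k+1}=t_{\chi}=T$, we have $u_{k+1}=u(T,X_{\chi})=\varphi(X_{\chi})$, so the quantity to estimate is $\varphi(X_{\chi})Y_{\chi}+Z_{\chi}-u'_{k+1}Y'_{k+1}-Z'_{k+1}$. I then record the geometry: with $\varrho\equiv1$, \eqref{sticky_eqn_3.13} gives $X_{\chi}=X^{\pi}_{k+1}+r_{k+1}\nu^{\pi}_{k+1}$ and $X'_{k+1}=X^{\pi}_{k+1}-r_{k+1}\nu^{\pi}_{k+1}$ (reflection across $X^{\pi}_{k+1}$), while \eqref{sticky_eqn_3.16} gives $t'_{k+1}=T-2\mu^{\pi}_{k+1}p_{k+1}$. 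The Case~IIIb condition $t''_{k+1}\ge T$ forces $0\le p_{k+1}\le r_{k+1}=\mathcal{O}(h^{1/2})$, so both $r_{k+1}$ and $p_{k+1}$ are $\mathcal{O}(h^{1/2})$; the loss of the full space--time symmetry of Lemma~\ref{sticky_lemma_4.3} is exactly why the target accuracy here is $\mathcal{O}(h)$ rather than $\mathcal{O}(hr_{k+1})$.

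The core step is a symmetric Taylor expansion. With the reference point $P^{*}=(T-\mu^{\pi}_{k+1}p_{k+1},\,X^{\pi}_{k+1})$ and the increment $\Delta=(\mu^{\pi}_{k+1}p_{k+1},\,r_{k+1}\nu^{\pi}_{k+1})$ one checks $(T,X_{\chi})=P^{*}+\Delta$ and $(t'_{k+1},X'_{k+1})=P^{*}-\Delta$, with $|\Delta|=\mathcal{O}(h^{1/2})$. Expanding $\varphi(X_{\chi})=u(P^{*}+\Delta)$ and $u'_{k+1}=u(P^{*}-\Delta)$ to third order, the even-order terms cancel in the difference, leaving $\varphi(X_{\chi})-u'_{k+1}=2\,Du(P^{*})[\Delta]+\mathcal{O}(h^{3/2})=2\mu^{\pi}_{k+1}p_{k+1}\,\partial_{t}u+2r_{k+1}(\nu^{\pi}_{k+1}\cdot\nabla u)+\mathcal{O}(h)$, where I may freeze the coefficients at $(T,X^{\pi}_{k+1})$ because each $\mathcal{O}(h^{1/2})$ prefactor absorbs the $\mathcal{O}(h^{1/2})$ displacement of the evaluation point. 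I would then split the error as $(\varphi(X_{\chi})-u'_{k+1})Y'_{k+1}+\varphi(X_{\chi})(Y_{\chi}-Y'_{k+1})+(Z_{\chi}-Z'_{k+1})$, insert the updates \eqref{sticky_eqn_3.14}--\eqref{sticky_eqn_3.15}, and use $Y'_{k+1}=Y_{k}(1+\mathcal{O}(h))$ together with $\varphi(X_{\chi})=\varphi^{\pi}_{k+1}+\mathcal{O}(h^{1/2})$ to collect all terms of order $h^{1/2}$.

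Two independent cancellations then dispatch the leading order. The terms carrying $r_{k+1}$ assemble into $2r_{k+1}\big(-\mu^{\pi}_{k+1}\mathcal{A}\varphi^{\pi}_{k+1}+(\nu^{\pi}_{k+1}\cdot\nabla\varphi^{\pi}_{k+1})+\gamma^{\pi}_{k+1}\varphi^{\pi}_{k+1}-\psi^{\pi}_{k+1}\big)Y_{k}=2r_{k+1}(\mathscr{L}u-\psi)(T,X^{\pi}_{k+1})Y_{k}$, which vanishes by \eqref{wbp3}. The terms carrying $p_{k+1}$ assemble the parabolic operator: using $u(T,\cdot)=\varphi$ so that $\mathcal{A}u(T,X^{\pi}_{k+1})=\mathcal{A}\varphi^{\pi}_{k+1}$, the temporal Taylor contribution $2\mu^{\pi}_{k+1}p_{k+1}\partial_{t}u$ combines with the $\mathcal{A}\varphi$, $c$ and $g$ correction terms built into \eqref{sticky_eqn_3.14}--\eqref{sticky_eqn_3.15} into $2\mu^{\pi}_{k+1}p_{k+1}\big(\partial_{t}u+\mathcal{A}u+cu+g\big)(T,X^{\pi}_{k+1})Y_{k}$, which vanishes by \eqref{wbp1}. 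Here the factor $(r_{k+1}-p_{k+1})$ in front of $\mathcal{A}\varphi^{\pi}_{k+1}$ in \eqref{sticky_eqn_3.15} splits perfectly: its $r_{k+1}$-part supplies the $-\mu\mathcal{A}u$ needed for the boundary-condition cancellation, and its $p_{k+1}$-part supplies the $+\mathcal{A}u$ needed for the PDE cancellation. After both cancellations, only genuinely $\mathcal{O}(h)$ contributions remain --- the third-order Taylor remainder $\mathcal{O}(|\Delta|^{3})$, the second-order products in $r_{k+1},p_{k+1}$ (all $\mathcal{O}(h)$), and the $\mathcal{O}(h)$ slack from freezing coefficients and from $Y'_{k+1}-Y_{k}$ --- each bounded by a constant multiple of $hY_{k}$; since $Y_{k}=Y_{\chi-1}$ on $\{t''_{k+1}>T\}$, this is the claim.

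I expect the main obstacle to be precisely the temporal truncation that destroys the symmetrization used in Case~IIIa: here the reflection uses the full distance $r_{k+1}$ while the admissible sticky time corresponds only to $p_{k+1}\le r_{k+1}$. The delicate point is to confirm that the correction coefficients in the $Y_{\chi}$ and $Z_{\chi}$ updates are exactly those that convert the residual temporal derivative $2\mu^{\pi}_{k+1}p_{k+1}\partial_{t}u$ into the vanishing PDE operator, so that the two leading cancellations --- one through \eqref{wbp3}, one through \eqref{wbp1} --- take place \emph{simultaneously}. It is this simultaneous cancellation that removes what would otherwise be an $\mathcal{O}(h^{1/2})$ residual and secures the $\mathcal{O}(h)$ bound that suffices, because Case~IIIb can occur at most once along a trajectory.
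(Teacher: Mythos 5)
Your proposal is correct and follows essentially the same route as the paper's proof: the same splitting into $(u_{k+1}-u^{'}_{k+1})Y_{k+1}$, $u^{'}_{k+1}(Y_{k+1}-Y^{'}_{k+1})$ and $Z_{k+1}-Z^{'}_{k+1}$, the same use of the terminal condition to replace $\mathcal{A}u(T,\cdot)$ by $\mathcal{A}\varphi$, the same observation that $t^{''}_{k+1}\geq T$ forces $0< p_{k+1}\leq r_{k+1}=\mathcal{O}(h^{1/2})$, and the same pair of leading-order cancellations --- the $r_{k+1}$-group through the boundary condition \eqref{wbp3} and the $p_{k+1}$-group through the PDE \eqref{wbp1}, with the factor $(r_{k+1}-p_{k+1})$ in \eqref{sticky_eqn_3.15} splitting between them exactly as you describe. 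Your single symmetric space--time expansion about $P^{*}=(T-\mu^{\pi}_{k+1}p_{k+1},X^{\pi}_{k+1})$ is a mild streamlining of the paper's sequential Taylor steps (first in time about $T$, then the symmetric spatial expansion about $X^{\pi}_{k+1}$); since only $\mathcal{O}(h)$ accuracy is needed here, the extra even-order cancellation it buys is not actually used.

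One point you assert but should verify explicitly: carrying out your bookkeeping shows that the assembly into $2\mu^{\pi}_{k+1}p_{k+1}\big(\partial_{t}u+\mathcal{A}u+cu+g\big)(T,X^{\pi}_{k+1})=0$ requires the $c$- and $g$-corrections to enter with \emph{plus} signs, i.e. $Y_{\chi}=Y^{'}_{k+1}+2r_{k+1}\gamma Y_{k}+2p_{k+1}\mu^{\pi}_{k+1}cY_{k}$ and a $+2p_{k+1}\mu^{\pi}_{k+1}gY_{k}$ term in $Z_{\chi}$. With the signs as printed in \eqref{sticky_eqn_3.14}--\eqref{sticky_eqn_3.15} the two groups leave an uncancelled residual $-4\mu^{\pi}_{k+1}p_{k+1}\big(c\varphi^{\pi}_{k+1}+g\big)Y_{k}=\mathcal{O}(h^{1/2})Y_{k}$, which would break the $\mathcal{O}(h)$ bound. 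The paper's own proof in fact computes with the plus signs (its expression for $Z_{k+1}-Z^{'}_{k+1}$ carries $+2p_{k+1}\mu^{\pi}_{k+1}c\varphi^{\pi}_{k+1}Y_{k}+2p_{k+1}\mu^{\pi}_{k+1}gY_{k}$, at variance with display \eqref{sticky_eqn_3.15}), so your claimed ``exact fit'' of the coefficients is right in substance and your derivation pins down the correct signs --- an apparent typo in the displayed updates --- but a literal appeal to \eqref{sticky_eqn_3.14}--\eqref{sticky_eqn_3.15} verbatim would not close, so state the verified signs rather than citing the displays unchecked.
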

\begin{proof}
Here, note that $k+1 =  \chi$ and $t_\chi = T$ (see \eqref{sticky_eqn_3.13}). Using Taylor' formula in time-variable and \eqref{wbp1}, we get
\begin{align}
    u_{k+1} - u^{'}_{k+1} & =  u(T, X_{\chi}) - u(T, X_{k+1}^{'}) - (t^{'}_{k+1} -T)\frac{\partial }{\partial t}u(T, X_{k+1}^{'})  + \mathcal{O}(h) \nonumber \\  &
    =  u(T, X_{\chi}) - u(T, X_{k+1}^{'}) - 2 p_{k+1}\mu^{\pi}_{k+1}\mathcal{A}\varphi^{\pi}_{k+1} \nonumber   \\  &  \;\;\;\; -  2 p_{k+1}\mu^{\pi}_{k+1} c(T, X^{\pi}_{k+1}) \varphi^{\pi}_{k+1} -  2 p_{k+1}\mu^{\pi}_{k+1} g(T, X^{\pi}_{k+1})   + \mathcal{O}(h), 
\end{align}
where $ p_{k+1} = (T-t^{'}_{k+1})/(2\mu_{k+1}^{\pi})$ (see (\ref{sticky_eqn_3.16})). 
Again, using Taylor's expansion with \eqref{sticky_eqn_3.13}, we obtain 
\begin{align}
    u(T, X_{\chi}) - u(T, X_{k+1}^{'}) = 2r_{k+1}(\nabla u(T, X^{\pi}_{k+1}) \cdot \nu_{k+1}^{\pi}) + \mathcal{O}(r_{k+1}^3). 
\end{align}
Therefore,
\begin{align}
    (u(T, X_{\chi}) &- u(T, X_{k+1}^{'}))Y_{k+1}   =    2r_{k+1}(\nabla u(T, X^{\pi}_{k+1}) \cdot \nu_{k+1}^{\pi})  Y_{k+1} -  2 p_{k+1}\mu^{\pi}_{k+1}\mathcal{A}\varphi^{\pi}_{k+1} Y_{k+1}
    \nonumber \\  & \quad  - 2 p_{k+1}\mu^{\pi}_{k+1} c(T, X^{\pi}_{k+1}) \varphi^{\pi}_{k+1}Y_{k+1} -  2 p_{k+1}\mu^{\pi}_{k+1} g(T, X^{\pi}_{k+1})Y_{k+1} \nonumber  \\  & \quad + \mathcal{O}(h)Y_{k} \nonumber  \\  & 
    = 2r_{k+1}(\nabla u(T, X^{\pi}_{k+1}) \cdot \nu_{k+1}^{\pi})  Y_{k} -  2 p_{k+1}\mu^{\pi}_{k+1}\mathcal{A}\varphi^{\pi}_{k+1} Y_{k}
    \nonumber \\  & \quad  - 2 p_{k+1}\mu^{\pi}_{k+1} c(T, X^{\pi}_{k+1}) \varphi^{\pi}_{k+1}Y_{k} -  2 p_{k+1}\mu^{\pi}_{k+1} g(T, X^{\pi}_{k+1})Y_{k} + \mathcal{O}(h)Y_{\chi-1}. \label{sticky_eqn_4.16}
\end{align}
Using \eqref{sticky_eqn_3.14}, we get
\begin{align}
    u^{'}_{k+1}(Y_{k+1} - Y_{k+1}^{'}) & = \Big(u(T, X_{k+1}^{\pi}) + \mu^{\pi}_{k+1}p_{k+1}\frac{\partial u}{\partial t}(T, X^{\pi}_{k+1}) + r_{k+1}(\nabla u(T, X^{\pi}_{k+1}) \cdot \nu_{k+1}^{\pi}) + \mathcal{O}(h)\Big) \nonumber
    \\  & \quad \times  \big(2r_{k+1}\gamma(T, X^{\pi}_{k+1}) Y_k   + 2p_{k+1}\mu_{k+1}^{\pi}c(T, X_{k+1}^{\pi}) \big)    \nonumber 
    \\  & =  2r_{k+1}\gamma(T, X^{\pi}_{k+1}) u(T, X_{k+1}^{\pi})Y_k + \mathcal{O}(h)Y_{\chi-1}. \label{sticky_eqn_4.17}
\end{align}
Adding \eqref{sticky_eqn_4.16} and \eqref{sticky_eqn_4.17}, we get
\begin{align}
    u_{k+1}Y_{k+1} - u_{k+1}^{'} Y_{k+1}^{'} &= 2r_{k+1}(\nabla u(T, X^{\pi}_{k+1}) \cdot \nu_{k+1}^{\pi})  Y_{k} + 2r_{k+1}\gamma(T, X^{\pi}_{k+1}) u(T, X_{k+1}^{\pi})Y_k  \nonumber \\   & \;\;\;\; -  2 p_{k+1}\mu^{\pi}_{k+1}\mathcal{A}\varphi^{\pi}_{k+1} Y_{k}
     \nonumber \\  & \quad 
     -  2 p_{k+1}\mu^{\pi}_{k+1} g(T, X^{\pi}_{k+1})Y_{k} + \mathcal{O}(h)Y_{\chi-1}. \label{sticky_eqn_4.17}
\end{align}
From (\ref{sticky_eqn_3.16}), we have
\begin{align}
    Z_{k+1} - Z_{k+1}^{'} &= -2\mu_{k+1}^{\pi}(r_{k+1} - p_{k+1})\mathcal{A}\varphi^{\pi}_{k+1}Y_{k} +2 p_{k+1}\mu^{\pi}_{k+1} c(T, X^{\pi}_{k+1}) \varphi^{\pi}_{k+1}Y_{k} \nonumber  \\  & \quad + 2 p_{k+1}\mu^{\pi}_{k+1} g(T, X^{\pi}_{k+1})Y_{k} - 2r_{k+1}\psi(T, X^{\pi}_{k+1}),
\end{align}
which combining with \eqref{sticky_eqn_4.17} yields the following:
\begin{align*}
    |u_{k+1}Y_{k+1} + Z_{k+1} - u_{k+1}^{'}Y_{k+1}^{'} - Z_{k+1}^{'}   |I_{\bar{G}^{c}}(X_{k+1}^{'})I(t^{''}_{k+1} > T) \leq CY_{\chi-1}h.
\end{align*}

\end{proof}

The following lemma corresponds to the Case IV mentioned in Subsection~\ref{subsection3.2}.
\begin{lemma}\label{sticky_lemma_4.5}
Let Assumptions~\ref{boundary_assum}-\ref{ellip_assum} hold, then we have the following one-step approximation:
\begin{align*}
|u_{k+1}Y_{k+1} + Z_{k+1} - u_{k+1}^{'}Y_{k+1}^{'} - Z_{k+1}^{'}   |I_{\bar{G}^{c}}(X_{k+1}^{'})I(t^{'}_{k+1} > T) \leq Chr_{k+1}Y_{k},\;\;\; a.s. 
\end{align*}
where $C > 0$ is independent of $h$. 

\end{lemma}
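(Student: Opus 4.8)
The plan is to exploit that Case~IV is a terminal step. Since \eqref{stikcy_euler_eqn_3.16} sets $t_\chi = T$, the terminal condition \eqref{wbp2} gives $u_{k+1}=u(T,X_\chi)=\varphi(X_\chi)$, and because the trajectory is stopped on the terminal slice the reference value is likewise pinned to the terminal datum, $u'_{k+1}=\varphi(X'_{k+1})$. The two spatial points straddle the boundary projection symmetrically, $X_\chi = X^\pi_{k+1}+r_{k+1}\nu^\pi_{k+1}$ and $X'_{k+1}=X^\pi_{k+1}-r_{k+1}\nu^\pi_{k+1}$, so the whole step reduces to expansions of $\varphi$ about $X^\pi_{k+1}$. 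As in Lemma~\ref{sticky_lemma_4.3} and Lemma~\ref{sticky_lemma_4.4} I would split
\[
E:=u_{k+1}Y_{k+1}+Z_{k+1}-u'_{k+1}Y'_{k+1}-Z'_{k+1} = (u_{k+1}-u'_{k+1})Y_{k+1}+u'_{k+1}(Y_{k+1}-Y'_{k+1})+(Z_{k+1}-Z'_{k+1}),
\]
and treat the three pieces separately.

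First I would Taylor expand $\varphi$ symmetrically about $X^\pi_{k+1}$: the odd reflection of $X_\chi$ and $X'_{k+1}$ makes the even-order terms cancel, giving $\varphi(X_\chi)-\varphi(X'_{k+1})=2r_{k+1}(\nabla\varphi^\pi_{k+1}\cdot\nu^\pi_{k+1})+\mathcal{O}(r_{k+1}^3)$, while $u'_{k+1}=\varphi^\pi_{k+1}-r_{k+1}(\nabla\varphi^\pi_{k+1}\cdot\nu^\pi_{k+1})+\mathcal{O}(r_{k+1}^2)$. Substituting the increment \eqref{sticky_euler_eqn_3.17} for $Y_{k+1}-Y'_{k+1}$ and \eqref{sticky_euler_eqn_3.18} for $Z_{k+1}-Z'_{k+1}$, and using $Y_{k+1}=Y_k(1+\mathcal{O}(h^{1/2}))$, each term is a monomial in $r_{k+1}$ whose coefficient is built from $\varphi$, $(\nabla\varphi\cdot\nu)$, $\mathcal{A}\varphi$, $\gamma$, $\mu$ and $\psi$ evaluated at $(T,X^\pi_{k+1})$.

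The decisive step is the cancellation. Collecting the $\mathcal{O}(r_{k+1})$ contributions from the three pieces yields $2r_{k+1}Y_k\big[(\nabla\varphi^\pi_{k+1}\cdot\nu^\pi_{k+1})+\gamma^\pi_{k+1}\varphi^\pi_{k+1}-\mu^\pi_{k+1}\mathcal{A}\varphi^\pi_{k+1}-\psi^\pi_{k+1}\big]$, and the $\mathcal{O}(r_{k+1}^2)$ contributions collapse to $\gamma^\pi_{k+1}$ times the same bracket. That bracket is precisely the boundary operator $\mathscr{L}u-\psi$ of \eqref{wbp3} specialized to $t=T$: there $u(T,\cdot)=\varphi$ by \eqref{wbp2} and $\mathcal{A}u(T,\cdot)=\mathcal{A}\varphi$, so the compatibility of \eqref{wbp2} with \eqref{wbp3} forces it to vanish (recall $\varrho\equiv 1$). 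Hence both the $\mathcal{O}(r_{k+1})$ and the $\mathcal{O}(r_{k+1}^2)$ layers annihilate, and only $\mathcal{O}(r_{k+1}^3)$ survives. Since $r_{k+1}=\mathcal{O}(h^{1/2})$ we have $r_{k+1}^3\le Ch\,r_{k+1}$, which with the factor $Y_k$ gives the claimed bound $Chr_{k+1}Y_k$.

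I expect the $\mathcal{O}(r_{k+1}^2)$ bookkeeping to be the main obstacle: after expanding $\varphi(X_\chi)$ to second order and multiplying by the quadratic $Y$- and $Z$-increments, one generates several $\mathcal{O}(r_{k+1}^2)$ cross terms (such as $\gamma(\nabla\varphi\cdot\nu)$, $\gamma^2\varphi$, $\gamma\mu\mathcal{A}\varphi$ and $\gamma\psi$) that must be shown to reassemble into $\gamma$ times the single boundary residual. A secondary point worth isolating is why no temporal-overshoot correction is needed here, in contrast to Case~II and Case~IIIb: because the step lands exactly on $\{t=T\}$, both target and reference are governed by the terminal datum $\varphi$, and the sticky waiting time—lying entirely beyond $T$—contributes nothing, so only the spatial symmetrization error remains and the bound sharpens from $\mathcal{O}(h)$ to $\mathcal{O}(hr_{k+1})$.
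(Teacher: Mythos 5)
Your proposal is, in substance, the paper's own proof: the same three-term splitting of $E$, the same symmetric Taylor expansions about $X^{\pi}_{k+1}$ yielding $u(T,X_{\chi})-u(T,X^{'}_{k+1})=2r_{k+1}(\nabla u(T,X^{\pi}_{k+1})\cdot\nu^{\pi}_{k+1})+\mathcal{O}(r_{k+1}^{3})$, insertion of the increments \eqref{sticky_euler_eqn_3.17}--\eqref{sticky_euler_eqn_3.18}, and the same final cancellation: the paper adds its expressions for $u_{k+1}Y_{k+1}-u^{'}_{k+1}Y^{'}_{k+1}$ and $Z_{k+1}-Z^{'}_{k+1}$ (its \eqref{sticky_eqn_4.7} and \eqref{sticky_eqn_4.8}) and uses \eqref{wbp3} at $t=T$, where \eqref{wbp2} gives $u(T,\cdot)=\varphi$ and hence $\mathcal{A}u(T,\cdot)=\mathcal{A}\varphi$. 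Your vanishing bracket $(\nabla\varphi\cdot\nu)+\gamma\varphi-\mu\mathcal{A}\varphi-\psi$, repeated with weight $\gamma$ at the $r_{k+1}^{2}$ layer, is exactly that identity, and it is legitimate under Assumption~\ref{ellip_assum}, which guarantees the classical solution satisfies the boundary condition up to $t=T$. So the cancellation mechanism and the bookkeeping you anticipate are correct and match the paper.

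There is, however, one genuine misstep: $u^{'}_{k+1}\neq\varphi(X^{'}_{k+1})$. By the notation fixed before Lemma~\ref{sticky_lemma_4.3}, and as required by the telescoping argument in the proof of Theorem~\ref{thrm_sticky_euler_conv} (where $u^{'}_{k+1}Y^{'}_{k+1}+Z^{'}_{k+1}$ is paired with $u_{k}Y_{k}+Z_{k}$ through the interior one-step estimate), one has $u^{'}_{k+1}=u(t^{'}_{k+1},X^{'}_{k+1})$ with $t^{'}_{k+1}>T$; only the forward point is pinned to the terminal slice, $u_{k+1}=u(T,X_{\chi})=\varphi(X_{\chi})$. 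The paper accordingly expands in time and retains the term $-(t^{'}_{k+1}-T)\frac{\partial}{\partial t}u\big(T+\epsilon(t^{'}_{k+1}-T)\big)$, which your identification silently deletes with an incorrect justification (``the reference value is likewise pinned to the terminal datum''). This term is of size $\mathcal{O}(h)Y_{k}$, and since $r_{k+1}$ can be arbitrarily small relative to $h$, it is not $\mathcal{O}(hr_{k+1})Y_{k}$; repairing your step therefore honestly yields the bound $ChY_{k}$ rather than the stated $Chr_{k+1}Y_{k}$ (the paper's own proof quietly absorbs the same term into its $\mathcal{O}(hr_{k+1})$ remainder without comment). The weaker bound is harmless for Theorem~\ref{thrm_sticky_euler_conv}, since Case~IV occurs at most once, at the terminal step, and enters the global estimate through the single $I(t^{'}_{k+1}\geq T)$ term bounded by $ChY_{\chi-1}$. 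Your closing ``secondary point'' should be adjusted in the same spirit: the reason no analogue of the Case~IIIb correction $p_{k+1}$ is needed is not that both values sit on $\{t=T\}$, but that here the temporal overshoot satisfies $t^{'}_{k+1}-T<h$ and is tolerable at a single step, whereas in Case~IIIb the uncorrected sticky overshoot $t^{''}_{k+1}-T$ can be of order $h^{1/2}$.
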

\begin{proof}
Using Taylor's formula, we have
\begin{align}
    u(T, X_{\chi}) - u_{k+1}^{'} = u(T, X_{\chi}) - u(T, X^{'}_{k+1}) - (t^{'}_{k+1} - T) \frac{\partial}{\partial t} u(T + \epsilon (t_{k+1}^{'} - T)),
\end{align}
and
\begin{align*}
    u(T, X_{\chi}) &= u(T, X^{\pi}_{k+1}) + r_{k+1} (\nabla u(T, X^{\pi}_{k+1}) \cdot \nu^{\pi}_{k+1}) + r^{2}_{k+1} D^2 u(T, X^{\pi}_{k+1})[\nu^{\pi}_{k+1}, \nu^{\pi}_{k+1}] + \mathcal{O}(r_{k+1}^{3}), \\
    u(T, X_{k+1}^{'}) &= u(T, X^{\pi}_{k+1}) - r_{k+1} (\nabla u(T, X^{\pi}_{k+1}) \cdot \nu^{\pi}_{k+1}) + r^{2}_{k+1} D^2 u(T, X^{\pi}_{k+1})[\nu^{\pi}_{k+1}, \nu^{\pi}_{k+1}]+ \mathcal{O}(r_{k+1}^{3}).
\end{align*}
implying
\begin{align}
    u(T, X_{\chi}) - u(T, X^{'}_{k+1}) = 2r_{k+1}(\nabla u(T, X^{\pi}_{k+1}) \cdot  \nu^{\pi}_{k+1}) + \mathcal{O}(r_{k+1}^{3}).
\end{align}
Therefore, we have
\begin{align}
   (u_{k+1} - u_{k+1}^{'})Y_{k+1} &=  2r_{k+1}(\nabla u(T, X^{\pi}_{k+1}) \cdot  \nu^{\pi}_{k+1})Y_{k+1} + \mathcal{O}(hr_{k+1}) \nonumber \\  &
   = 2r_{k+1}(\nabla u(T, X^{\pi}_{k+1}) \cdot  \nu^{\pi}_{k+1})\big(Y_{k} + h c_k Y_k + 2 r_{k+1}\gamma(T, X^{\pi}_{k+1})Y_k \nonumber\\  &  \;\;\;\; + 2 r_{k+1}^2\gamma^2(T, X^{\pi}_{k+1})Y_k\big) + \mathcal{O}(hr_{k+1})
  \nonumber  \\  &
   = 2r_{k+1}(\nabla u(T, X^{\pi}_{k+1}) \cdot  \nu^{\pi}_{k+1})Y_{k}
   + 4r_{k+1}^2\gamma(T, X^{\pi}_{k+1})(\nabla u(T, X^{\pi}_{k+1}) \cdot \nu^{\pi}_{k+1}) \nonumber
   \\   & \;\;\;\;
   + \mathcal{O}(hr_{k+1}).  \label{sticky_eqn_4.23}
\end{align}
Similarly, using \eqref{sticky_euler_eqn_3.17}, we get
\begin{align}
    u^{'}_{k+1}(Y_{k+1} - Y_{k+1}^{'}) &= \big(u(T, X^{\pi}_{k+1}) - r_{k+1}(\nabla u(T, X_{k+1}^{\pi}) \cdot \nu^{\pi}_{k+1})\big)
    \nonumber \\ & \quad \times \big( 2 r_{k+1}\gamma(T, X^{\pi}_{k+1})Y_k+ 2 r_{k+1}^2\gamma^2(T, X^{\pi}_{k+1})Y_k\big)
    \nonumber 
    \\   & 
    = 2r_{k+1}u(T, X^{\pi}_{k+1})\gamma(T, X^{\pi}_{k+1})Y_{k} - 2r^2_{k+1}\gamma(T, X^{\pi}_{k+1})(\nabla u(T, X_{k+1}^{\pi}) \cdot \nu^{\pi}_{k+1})Y_{k}
    \nonumber \\ &  \;\;\;\; +  2r_{k+1}^2u(T, X_{k+1}^{\pi}) \gamma^2(T, X^{\pi}_{k+1})Y_k.\label{sticky_eqn_4.24}
\end{align}
Adding \eqref{sticky_eqn_4.23} and \eqref{sticky_eqn_4.24} gives
\begin{align}
    u_{k+1}Y_{k+1} &- u^{'}_{k+1}Y_{k+1}^{'} = 2r_{k+1}[(\nabla u(T, X^{\pi}_{k+1}) \cdot  \nu^{\pi}_{k+1}) + u(T, X^{\pi}_{k+1})\gamma(T, X^{\pi}_{k+1})]Y_{k} 
  \nonumber   \\   & \;\;\; \; 
   + 2r_{k+1}^2\gamma(T, X^{\pi}_{k+1})[(\nabla u(T, X^{\pi}_{k+1}) \cdot \nu^{\pi}_{k+1}) 
    +  u(T, X_{k+1}^{\pi}) \gamma(T, X^{\pi}_{k+1})]Y_k + \mathcal{O}(hr_{k+1}).\label{sticky_eqn_4.7}
\end{align}
Using \eqref{sticky_euler_eqn_3.18}, we have
\begin{align}
    Z_{k+1} - Z_{k+1}^{'} &=  -2r_{k+1}\mu^{\pi}_{k+1}\mathcal{A}\varphi^{\pi}_{k+1} -2r^{2}_{k+1}\gamma(T, X^{\pi}_{k+1})\mu^{\pi}_{k+1}\mathcal{A}\varphi^{\pi}_{k+1} - 2r_{k+1}\psi(T, X^{\pi}_{k+1}) \nonumber \\  &  \;\;\;\; - 2r_{k+1}\gamma(T, X^{\pi}_{k+1})\psi(T, X^{\pi}_{k+1})\label{sticky_eqn_4.8}
\end{align}
From \eqref{sticky_eqn_4.7}-\eqref{sticky_eqn_4.8}, we get
\begin{align*}
    |u_{k+1}Y_{k+1} + Z_{k+1} - u_{k+1}^{'}Y_{k+1}^{'} - Z_{k+1}^{'}   |I_{\bar{G}^{c}}(X_{k+1}^{'})I(t^{'}_{k+1} > T) \leq Chr_{k+1}Y_{k}, 
\end{align*}
where $C>0$ is independent of $T$. 
\end{proof}

\subsection{Proof of Theorem~\ref{thrm_sticky_euler_conv}}\label{sticky_subsec_conv_proof}

In addition to the one-step lemmas proved in the previous section, other important ingredient of the main convergence theorem is the lemma related to average number of hits of the approximating Markov chain with the boundary, which we prove below. To this end, we introduce a one-step transition operator for a general  Markov chain $(t_{k}^{'},X_{k}^{'})$, $k=0,\ldots,\hat{\chi}$, denoted with $\mathbb{T}_h=\mathbb{T}$ and defined as
\begin{equation*}
    (\mathbb{T}_h V)(t,x) = \mathbb{T}V(t,x) := \mathbb{E}\big[ \big . V(t_1^{'}, X_{1}^{'}) \big | t_{0}=t, X_{0}=x\big],
\end{equation*}
where $(t,x)$ is an arbitrary point in $[T_{0}, T_1)\times G$
and $V(t,x) :  [T_{0}, T_1]\times \big(\bar G \cup \bar{G}_{-r}\big) \rightarrow \mathbb{R}$.
We note that
\begin{equation*}
\mathbb{E}\big[ \big . V(t_{k+1}^{'}, X_{k+1}^{'}) \big | X_{k}^{'}=x\big]
=\mathbb{T} V(t_k^{'},x).
\end{equation*}

Consider the boundary value problem associated with a Markov chain $(t_{k}^{'},X_{k}^{'})$:
\begin{align}
    q(x) \mathbb{T}V(t,x) - V(t,x) &= -f(t,x),  &(t,x) \in [T_{0}, T_1)\times \big(\bar{G}\cup \bar{G}_{-r}\big), \label{eq:18} \\
    V(T,x) &= 0,       &x \in  \big(\bar{G} \cup \bar{G}_{-r}\big),\label{eq:19}
\end{align}
where $g(t,x) \geq 0$ and $q(x) > 0$. The solution to this problem starting from $(t,x)=(t_k,x)$ is given by \cite{20, 46, mil_tretyakov_book}:
\begin{equation}
    V(t_k^{'},x) =  \mathbb{E}\bigg[ \bigg . \sum\limits_{i=k}^{\hat\chi-1}f(t_{i}^{'},X_{i}^{'})\prod\limits_{j=k}^{i-1}q(X_{j}^{'}) \bigg | X_{k}^{'}=x\bigg]. \label{eq:20}
\end{equation}

Consider the Markov chain $(t_{k}^{'}, X^{'}_{k})_{k=0}^{\chi -2}$ constituting auxiliary steps in Algorithm~\ref{sticky_euler_method}. Consider a $d-1$ dimensional hypersurface $\partial G_{-r}$ which is parallel to $\partial G$  and at  distance of $r$, where $r = \mathcal{O}(h^{1/2})$, such that $r_k < r$ for all $k$ steps in Algorithm~\ref{sticky_euler_method}. We denote the boundary zone between $\partial G$ and $\partial G_{-r}$ as $G_{-r}$.  For the convenience, we mention the update rule again here, for $k = 0, \dots, \chi-2 $,  $t_{k+1}^{'} = t_{k}^{'} + 2r_{k}\mu(X_{k}^{\pi})I_{\bar{G}_{-r}}(X_{k}^{'}) + h$ and $X_{k+1}^{'}  =  X_{k}^{'} + 2r_{k}\nu^{\pi}_{k} I_{\bar{G}_{-r}}(X_{k}^{'}) + \delta(X^{'}_{k})  $ with $\delta (x) =  b(x + 2 r I_{\bar{G}_{-r}}(x)) h + \sigma (x + 2 r I_{\bar{G}_{-r}}(x))\xi \sqrt{h}$.   
\begin{lemma}\label{sticky_lemma_avgreflection}
    Let Assumptions~\ref{boundary_assum}-\ref{sticky_assum} hold. Then, the following inequality is satisfied for a fixed $K>0$:
    \begin{align*}
       \mathbb{E}\sum_{k=0}^{\chi-2}r_{k}I_{\bar{G}^c}(X_{k}^{'}) \prod_{i=1}^{k-1}(1 + K r_{i}) \leq C, 
    \end{align*}
    where $C>0$ is independent of $h$.
\end{lemma}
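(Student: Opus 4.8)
The plan is to recognize the left-hand side as the solution of the discrete boundary value problem \eqref{eq:18}--\eqref{eq:19} and then dominate it by an explicit supersolution. Concretely, I would apply the representation \eqref{eq:20} to the auxiliary chain $(t_k^{'},X_k^{'})$ with the choices $f(t,x) = \dist(x,\partial G)\,I_{\bar G^c}(x)$ and $q(x) = 1 + K\,\dist(x,\partial G)\,I_{\bar G^c}(x)$, both functions of the current state only, with $f\ge 0$ and $q>0$. Since $X_0\in G$ forces $r_0=0$, the $j=0$ factor equals $1$ and the product $\prod_{i=1}^{k-1}(1+Kr_i)$ matches $\prod_{j=0}^{k-1}q(X_j^{'})$ (the factors being trivial at interior steps, where the reflection distance vanishes); after matching the stopping conventions, the quantity to be bounded is exactly $V(t_0^{'},X_0^{'})$. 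It therefore suffices to produce a constant upper bound for $V$.

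For this I would use a comparison (discrete maximum) principle: if $\Phi$ satisfies $q(x)\mathbb{T}\Phi - \Phi \le -f$ on $[T_0,T_1)\times(\bar G\cup\bar G_{-r})$ together with $\Phi(T,\cdot)\ge 0$, then $W:=\Phi-V$ obeys $q\mathbb{T}W - W\le 0$ and $W(T,\cdot)\ge 0$, so the monotonicity of the representation \eqref{eq:20} forces $W\ge 0$, i.e. $V\le\Phi$. The key idea is to exploit the stickiness and take the purely time-dependent candidate $\Phi(t,x)=e^{\beta(T-t)}$ for a large constant $\beta$. Because $\Phi$ does not depend on $x$, the expectation defining $\mathbb{T}\Phi$ is trivial: neither the Euler increment $\delta(X_k^{'})$ nor the reflection displacement $2r_k\nu_k^{\pi}$ enters, and only the time step, which is deterministic given the state, survives.

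Verifying the supersolution inequality then reduces to two short computations. In the interior the time increment is $h$, so $\mathbb{T}\Phi-\Phi = e^{\beta(T-t)}(e^{-\beta h}-1)\le 0 = -f$. In the boundary zone, where $r=\dist(x,\partial G)=\mathcal{O}(h^{1/2})$ and the time increment is $2r\mu(x^{\pi})+h$, I obtain $q(x)\mathbb{T}\Phi-\Phi = e^{\beta(T-t)}\big[(1+Kr)e^{-\beta(2r\mu+h)}-1\big]$; expanding the bracket and using $\mu>0$ (Assumption~\ref{sticky_assum}) together with continuity on the compact boundary, so that $\mu\ge\mu_0>0$, yields the bound $(K-2\beta\mu)r+\mathcal{O}(r^2)$, which is $\le -r$ once $\beta\ge (K+2)/(2\mu_0)$ and $h$ is small. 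Since $e^{\beta(T-t)}\ge 1$ and the bracket is negative, this gives $q\mathbb{T}\Phi-\Phi\le -r=-f$, so $\Phi$ is a supersolution and $V(t_0^{'},X_0^{'})\le \Phi(t_0^{'},X_0^{'})\le e^{\beta T}=:C$, independent of $h$.

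The step I expect to be most delicate is not the Taylor estimate but the rigorous justification of the representation and comparison, since a priori $V$ is only meaningful once the weighted number of visits is finite. I would handle this by truncating the chain at a fixed step $N$, establishing the bound above uniformly in $N$ for the truncated functional, and then letting $N\to\infty$ by monotone convergence, all summands being nonnegative. The conceptual crux, and the reason a space-independent Lyapunov function already suffices, is that each boundary visit advances time by $2r_k\mu\sim h^{1/2}$ and hence multiplies $\Phi$ by $e^{-2\beta\mu r_k}\le 1-2\beta\mu r_k+\mathcal{O}(r_k^2)$; this geometric decay per visit is exactly what dominates the growth factor $1+Kr_k$, so Assumption~\ref{sticky_assum} is precisely what makes the estimate close.
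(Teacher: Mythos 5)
Your proposal is correct, and it shares the paper's skeleton --- identify the left-hand side with the solution $v$ of \eqref{eq:18}--\eqref{eq:19} via the representation \eqref{eq:20} with $q(x)=1+K\dist(x,\partial G)I_{\bar G^c}(x)$, then dominate $v$ by an explicit supersolution --- but your supersolution is genuinely different, and the difference is instructive. The paper uses the space--time function $W(t,x)=e^{K_1(T-t)}e^{K_2B(x)}$, where $B$ is a $C^4$ extension of the distance function (requiring Assumption~\ref{boundary_assum} and \cite[Lemma~6.37]{gilbarg_trudinger}); there the decay per boundary visit comes from the \emph{spatial} reflection displacement, which moves the chain inward and contributes the term $K_2 r(x)$ in $(*)$, so the choice $K_2=K+2$ beats the growth factor $1+Kr$ regardless of the size of $\mu$, with $K_1$ then tuned large to keep the interior source $-(\mathbb{T}W-W)$ nonnegative against the $K_2\mathcal{O}(h)$ drift term. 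You instead take the purely time-dependent $\Phi(t)=e^{\beta(T-t)}$ and extract the decay from the sticky time advance $2r\mu(x^{\pi})\geq 2\mu_0 r$ per boundary hit; since the time increment of the auxiliary chain is deterministic given the state, $\mathbb{T}\Phi$ is computed exactly, the interior case is trivial ($\mathbb{T}\Phi-\Phi\leq 0=-f$ with your choice $f=\dist(\cdot,\partial G)I_{\bar G^c}$), and the boundary-zone Taylor estimate $(K-2\beta\mu)r+\mathcal{O}(r^2)\leq -r$ closes once $\beta\geq(K+2)/(2\mu_0)$. This buys real simplification --- no smooth extension of the distance function, no interior Euler-increment estimates, no balancing of two constants --- and your truncation-at-$N$ plus monotone-convergence justification of the comparison step is actually more careful than the paper, which asserts $v\leq V$ without comment. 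The price is that your argument hinges essentially on strict stickiness: you need $\mu_0=\min_{\partial G}\mu>0$, which does hold under the stated hypotheses (Assumption~\ref{sticky_assum} gives $\mu>0$, and $\mu\in C^{2+\epsilon}(\partial G)$ on the compact boundary), but the bound you obtain degrades as $\mu_0\downarrow 0$, whereas the paper's spatial mechanism is uniform in $\mu$ and would survive in the pure-reflection limit $\mu\equiv 0$, consistent with the reflected-diffusion literature the scheme generalizes. Within the scope of Lemma~\ref{sticky_lemma_avgreflection} as stated, your proof is complete and valid.
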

\begin{proof}
    If we take $q (x) := 1 + K\dist(x, \partial G) I_{\bar{G}^{c}}(x)$ and $ f(t,x) = \dist(x, \partial G) $ then solution to \eqref{eq:18}-\eqref{eq:19} is given by
    \begin{align*}
        v(t_0, x) = \mathbb{E}\sum_{k=0}^{\chi-2}r_{k}I_{\bar{G}^c}(X_{k}^{'}) \prod_{i=1}^{k-1}(1 + K r_{i}).
    \end{align*}
If we fix $q(x) $ as above and find a solution  $V(t,x) $  of \eqref{eq:18}-\eqref{eq:19} for a $f$ which satisfies $f(t,x) \geq I_{\bar{G}^c}(x)r(x)$ for all $(t,x) \in [t_0, T]\times \bar{G} \cup \bar{G}_{-r}$ then $v(t_0,x) \leq V(t_0, x) $. 
    
 To this end, consider the following function: 
 \begin{align}
     W(t, x) &= \begin{cases}
         e^{K_1 (T-t)} e^{K_2 B(x)},\quad  (t,x) \in [t_0, T_1] \times (\bar{G}\cup \bar{G}_{-r}),\\
         0, \quad (t,x) \in \{T_1\} \times (\bar{G}\cup \bar{G}_{-r}),
     \end{cases}
 \end{align}
where $B(x) = \dist(x,\partial G)$ for all $x \in G_{-r}$ and then we smoothly extend it to the entire space ensuring $B(x) \in C^4(\bar{G} \cup \bar{G}_{-r})$. Note that such an extension is possible due to Assumption~\ref{boundary_assum} (see \cite[Lemma~6.37]{gilbarg_trudinger}). We discuss the choice of $K_1$ and $K_2$ later in the proof. 

 We consider two cases here. In first case, we calculate $f(t,x) := - ( \mathbb{T} W(t,x) - W(t,x))$ when $x \in \bar{G}$. Nest, we discuss  the scenario when $x \in \bar{G}^c$. 
 \begin{description}
     \item[Case 1: $x \in \bar{G}$. ] In this case, $q(x) = 1$ and we have
     \begin{align*}
         \mathbb{T}&W(t,x) - W(t,x) = \mathbb{E}e^{K_1(T-t-h)}e^{K_2 B(X_1^{'})} - e^{K_1(T-t)}e^{K_2 B(x)} 
         \\  & = e^{K_1(T-t)}e^{K_2 B(x)}(1 - K_1 h + \alpha_1(K_1)\mathcal{O}(h^2)) \big( 1 + K_2 \mathbb{E} (\nabla B(x) \cdot\delta(x)) + \alpha_1(K_2)\mathcal{O}(h)\big)  \\  & \quad - e^{K_1(T-t)}e^{K_2 B(x)} 
         \\  & 
         = (- K_1 h + K_2 \mathcal{O}(h) + \alpha_3(K_1, K_2)\mathcal{O}(h^2))e^{K_1(T-t)}e^{K_2 B(x)},
     \end{align*}
     where $\mathcal{O}(h)$ and $\mathcal{O}(h^2)$ do not depend on $K_1$, and $\alpha_i$, $i=1,\dots,3$ are generic functions.
\item[Case 2: $x \in \bar{G}^c$. ]
In this case, $q(x) = 1 + K\dist(x ,\partial G)$. For brevity, we denote $ r(x) := \dist(x ,\partial G)$ and recall $x^{\pi}$ is projection of $x$ on $\partial G$. We have
\begin{align*}
    (&1 + K r(x))  \mathbb{T}W(t,x)  - W(t,x) = (1 + Kr(x)) e^{K_1 (T- t - 2r(x) \mu(x^{\pi}) - h)} e^{K_2 B(X_{1}^{'})}  -  e^{K_1(T-t)}e^{K_2 B(x)} 
    \\  & = e^{K_1(T-t)}e^{K_2 B(x)}\bigg[(1 + Kr(x)) \big(1 - K_1 r(x) \mu(x^{\pi}) - K_1 h + \frac{K_{1}^{2}}{2} r^2(x)\mu^2(x^{\pi}) + K_1^2\mathcal{O}( h^{3/2})\big)
    \\  &  \quad \;\;  \times \bigg( 1 + K_2 r(x)\bigg( \frac{x - x^{\pi}}{|x - x^{\pi}|}\cdot \nu(x^{\pi})\bigg) + K_2^2 \mathcal{O}(r^2(x))\bigg) -1 \bigg] 
    \\  & 
    = -e^{K_1(T-t)}e^{K_2 B(x)} \underbrace{\big(  K_1 r(x) \mu(x^{\pi}) + K_2 r(x) - K r(x) \big)}_{(*)} \\  & \quad  - e^{K_1(T-t)}e^{K_2 B(x)} \underbrace{\big( K_1 h - \alpha_4(K_2)\mathcal{O}(h) -  \alpha_5(K_1, K_2) \mathcal{O}(h^{3/2}) + K_1 K r(x) h + K_2 K r^2(x) \big)}_{(**)},  
\end{align*}
where $\alpha_i$, $i=4,5$ are generic functions. 
 \end{description} We choose $K_2 = K+2$ such that term (*) in Case~2  is always greater than $r(x)$. Depending on $K_2$, we make a choice of $K_1$ such that term (**) and $f(t,x) = -(\mathbb{T}W- W)$  in Case~1 are positive . This implies that we have constructed a function $f(t,x) = - (1 + I_{G_{-r}}(x))(\mathbb{T} W - W)$ such that $f(t,x) \geq I_{\bar{G}^{c}}(x) r(x)$. Therefore, $v(t_0, x) \leq W(t_0,x)$.  
\end{proof}

\begin{corollary} \label{sticky_lemma_avgreflection_corollary1}
Under Assumptions~\ref{boundary_assum}-\ref{sticky_assum}, for any constant $K>0$ the following inequalities hold
\begin{equation}\label{neweq3.29}
\mathbb{E}\bigg(\sum\limits_{k=1}^{\chi-2}\prod_{i=0}^{k-1}(1+Kr_{i}I_{G_{-r}}(X_{i}^{'}))\bigg) \leq \frac{C}{h},\quad\text{and},\quad
    \mathbb{E}\bigg( \prod_{i=0}^{\chi-2}(1+Kr_{i}I_{G_{-r}}(X_{i}^{'}))  \bigg) \leq C,
\end{equation}
where $C$ is a positive constant independent of $h$.
\end{corollary}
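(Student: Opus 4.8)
The plan is to reduce both inequalities to Lemma~\ref{sticky_lemma_avgreflection} by elementary algebraic manipulations, exploiting the monotone structure of the partial products
\[
\Pi_k := \prod_{i=0}^{k-1}\big(1 + Kr_i I_{G_{-r}}(X_i')\big), \qquad \Pi_0 := 1,
\]
together with a deterministic bound on the number of steps $\chi$.

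For the second inequality I would start from the elementary telescoping identity $\prod_{i=0}^{N}(1+a_i) = 1 + \sum_{k=0}^{N} a_k\prod_{i=0}^{k-1}(1+a_i)$, valid for $a_i \geq 0$, applied with $a_i = Kr_i I_{G_{-r}}(X_i')$ and $N = \chi - 2$. This gives
\[
\prod_{i=0}^{\chi-2}\big(1+Kr_iI_{G_{-r}}(X_i')\big) = 1 + K\sum_{k=0}^{\chi-2}r_kI_{G_{-r}}(X_k')\,\Pi_k.
\]
Taking expectations, the sum on the right is precisely the quantity controlled in Lemma~\ref{sticky_lemma_avgreflection}: since the reflection length vanishes in the interior (so $r_0=0$ as $X_0=x\in G$), and $I_{G_{-r}}$ coincides with $I_{\bar{G}^c}$ on the visited states once $h$ is small enough that $\dist(X_k',\partial G)=\mathcal{O}(h^{1/2})<r$, one has $\Pi_k=\prod_{i=1}^{k-1}(1+Kr_i)$ and $r_kI_{G_{-r}}(X_k')=r_kI_{\bar{G}^c}(X_k')$. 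Hence $\mathbb{E}\sum_{k=0}^{\chi-2}r_kI_{G_{-r}}(X_k')\Pi_k\leq C$, and the identity yields $\mathbb{E}\,\Pi_{\chi-1}\leq 1+KC$, which is the second bound in \eqref{neweq3.29}.

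For the first inequality, the key observation is that every auxiliary step advances time by at least $h$: an interior step has $t_{k+1}'=t_k'+h$, while a reflected step has $t_{k+1}'=t_k'+2r_k\mu(X_k^{\pi})+h\geq t_k'+h$ because $\mu>0$. As the chain terminates once the time variable reaches $T$, this forces the deterministic bound $\chi\leq \lceil (T-t_0)/h\rceil+1=:N_{\max}=\mathcal{O}(h^{-1})$. Moreover each factor defining $\Pi_k$ is at least $1$, so $(\Pi_k)_k$ is nondecreasing and $\Pi_k\leq\Pi_{\chi-1}$ for all $k\leq\chi-2$. Combining these,
\[
\sum_{k=1}^{\chi-2}\Pi_k \leq (\chi-2)\,\Pi_{\chi-1}\leq N_{\max}\,\Pi_{\chi-1},
\]
and since $N_{\max}$ is deterministic, taking expectations and invoking the second inequality already proved gives $\mathbb{E}\sum_{k=1}^{\chi-2}\Pi_k\leq N_{\max}\,\mathbb{E}\,\Pi_{\chi-1}\leq C/h$.

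I expect the only delicate point to be the bookkeeping needed to match the index ranges and the two indicators ($I_{G_{-r}}$ versus $I_{\bar{G}^c}$) between the corollary and Lemma~\ref{sticky_lemma_avgreflection}; this rests on the facts that reflections occur only within an $\mathcal{O}(h^{1/2})$-neighbourhood of $\partial G$ and that $r_0=0$. Conceptually the argument is light, and in particular one should resist building a fresh supersolution of \eqref{eq:18}-\eqref{eq:19} with $f\equiv 1$ for the first inequality: the deterministic step count $\chi\leq N_{\max}$ makes the monotonicity argument above much cleaner and avoids the competition between interior and boundary-layer scales that such a construction would entail.
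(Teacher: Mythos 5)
Your proposal is correct, and it splits against the paper as follows. For the second bound in \eqref{neweq3.29} your argument coincides with the paper's: both use the telescoping identity $\prod_{i=0}^{\chi-2}(1+a_i)=1+\sum_{k=0}^{\chi-2}a_k\prod_{i=0}^{k-1}(1+a_i)$ with $a_i=Kr_iI_{G_{-r}}(X_i')$ and then invoke Lemma~\ref{sticky_lemma_avgreflection}; your bookkeeping ($I_{G_{-r}}$ versus $I_{\bar{G}^c}$ on the visited states, and the trivial $i=0$ factor since $X_0\in G$) is exactly the identification the paper makes tacitly. Where you genuinely depart is the first bound, and in the logical order: the paper proves the first inequality \emph{independently}, by returning to the discrete problem \eqref{eq:18}--\eqref{eq:19} with $f\equiv 1$ and dominating it by $f=l/h$, where $l$ is the function built casewise in Lemma~\ref{sticky_lemma_avgreflection}, so that $V=W/h$ is a supersolution and the representation \eqref{eq:20} gives the $\mathcal{O}(1/h)$ bound; it then obtains the second bound by telescoping. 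You reverse this: writing $\Pi_k:=\prod_{i=0}^{k-1}(1+Kr_iI_{G_{-r}}(X_i'))$, you deduce the first bound from the second via the deterministic step count $\chi\leq N_{\max}=\mathcal{O}(h^{-1})$ --- valid because every auxiliary step advances time by at least $h$ ($\mu>0$ makes the boundary-layer increment $2r_k\mu(X_k^{\pi})$ nonnegative), a fact the paper itself records as $\chi\leq N=\lfloor T/h\rfloor$ in the proof of Theorem~\ref{thrm_sticky_euler_conv} --- combined with the monotonicity $\Pi_k\leq\Pi_{\chi-1}$. Since $N_{\max}$ is a constant, $\mathbb{E}\sum_{k=1}^{\chi-2}\Pi_k\leq N_{\max}\,\mathbb{E}\,\Pi_{\chi-1}\leq C/h$ is legitimate and no correlation issue arises. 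What each route buys: yours is shorter and avoids constructing a second supersolution (rescaling $W$ by $1/h$ and re-verifying positivity of $f$); the paper's is more robust, in that it would survive in schemes where the step count is random and unbounded with only $\mathbb{E}\chi=\mathcal{O}(1/h)$ --- there your crude pathwise bound $\sum_k\Pi_k\leq\chi\,\Pi_{\chi-1}$ involves a product of correlated random variables that cannot be split, whereas the supersolution argument weights each path correctly. In the present setting both are sound, and your reversal of the order of the two inequalities costs nothing.
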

\begin{proof}
We return to the boundary value problem (\ref{eq:18})-(\ref{eq:19}) associated with the Markov chain $(t_{k}^{'},X_{k}^{'})$, whose solution is given by (\ref{eq:20}). If we take $f(t,x) = 1$ and $q(x) = (1+r_{0}{I_{\bar{G}_{-r}}(x)})$ then the solution
 of the problem is
\begin{equation}\label{3.27}
     v(t_0,x) = \mathbb{E}\Bigg( \bigg . \sum\limits_{k=0}^{\chi-2}\prod_{i=0}^{k-1}\big(1+Kr_{i}I_{G_{-r}}(X_{i}^{'})\big) \bigg |  X_{0}^{'}=x \Bigg).
\end{equation}
If we can construct a solution to (\ref{eq:18})-(\ref{eq:19}) with $f(t,x) \geq 1$, then $v(t,x) \leq V(t,x)$. To achieve this, we choose $f(t,x) = l(t,x)/h$, where $l(t,x)$ is constructed according to the cases in Lemma \ref{sticky_lemma_avgreflection}. This choice ensures $f(t,x) \geq 1$, yielding $v(t_0,x) \leq V(t_0,x)$ with $V(t,x) = W(t,x)/h$. Our next goal is to prove that $\mathbb{E}\big(\prod_{i=0}^{\chi-2}(1+Kr_{i}I_{G_{-r}}(X_{i}^{'}))\big) \leq C$, where the constant $C$ does not depend on $h$. Observe that
\begin{align*}
    &\mathbb{E}\bigg(\sum\limits_{k=0}^{\chi-2}r_{k}I_{G_{-r}}(X_{k}^{'})\prod_{i=0}^{k-1}(1+ Kr_{i}I_{G_{-r}}(X_{i}^{'}))\bigg)
     = \frac{1}{K}\mathbb{E}\bigg(\sum\limits_{k=0}^{\chi-2}\Big( \prod\limits_{i=0}^{k}(1+Kr_{i}I_{G_{-r}}(X_{i}^{'})) \\ & - \prod\limits_{i=0}^{k-1}(1+Kr_{i}I_{G_{-r}}(X_{i}^{'}))\Big)\bigg)
     =\frac{1}{K}\mathbb{E}\bigg(\prod\limits_{i=0}^{\chi-2}\big(1+Kr_{i}I_{G_{-r}}(X_{i}^{'})\big) - 1\bigg).
\end{align*}
Using Lemma~\ref{sticky_lemma_avgreflection}, we obtain second bound in (\ref{neweq3.29}). 
\end{proof}

\begin{proof}[Proof of Theorem~\ref{thrm_sticky_euler_conv}]
We have
\begin{align*}
    \mathbb{E}&\varphi (X_\chi)Y_{\chi} + Z_{\chi} - \mathbb{E}[\varphi(X(T))Y(T)+ Z(T)] =   \mathbb{E}u(t_\chi, X_{\chi})Y_\chi + Z_{\chi} - u(t_0, x)Y_{0} - Z_{0} 
    \\ & = \mathbb{E}\sum_{k=0}^{\chi -1} u_{k+1}Y_{k+1} + Z_{k+1} - u_{k}Y_{k} - Z_{k}  =  
    \mathbb{E}\sum_{k=0}^{N - 1}
    \mathbb{E} (u_{k+1}Y_{k+1} + Z_{k+1} -u_{k}Y_{k} - Z_{k} \; |\; X_{k}),
\end{align*}
where  we assign $u_{k +1} := u_k$, $Y_{k+1} := Y_k$ and $Z_{k+1} := Z_k$ for $ k = \chi,\dots, N := \lfloor T/h \rfloor$. It is not difficult to deduce from Algorithm~\ref{sticky_euler_method} that $\chi \leq N$. Splitting the terms, we obtain 
\begin{align*}
    \mathbb{E}\varphi (X_\chi)Y_{\chi} + Z_{\chi} - \mathbb{E}[\varphi(X(T))Y(T) + Z(T)] &=  \mathbb{E}\sum_{k=0}^{\chi -1}
 (u_{k+1}Y_{k+1} + Z_{k+1}-u^{'}_{k+1}Y^{'}_{k+1} - Z_{k+1}^{'}) \\  & \quad  +  \mathbb{E}\sum_{k=0}^{N -1}
    \mathbb{E} (u^{'}_{k+1}Y^{'}_{k+1} + Z^{'}_{k+1} -u_{k}Y_{k} - Z_{k} \; |\; X_{k}).
    \end{align*}
It can be shown via Taylor's expansion that the following holds under Assumptions~\ref{boundary_assum}-\ref{ellip_assum}: 
\begin{align}
    \mathbb{E}(u_{k+1}^{'} Y_{k+1}^{'} + Z_{k+1}^{'} - u_{k} Y_{k} - Z_{k} \;|\; X_k)I(t_{k+1}^{'} < T) \leq CY_{k}h^2, \label{one_step_sticky_inside _domain}
\end{align}
where $C >0$ is  independent of $h$. The proof of above one-step error estimate is available in \cite{mil_tretyakov_book} (also see \cite[Lemma~3.2]{leimkuhler2023simplerandom}). Therefore, we obtain
    \begin{align*}
  |\mathbb{E}\varphi (X_\chi)Y_{\chi} + Z_{\chi} - \mathbb{E}[\varphi(X(T))Y(T) + Z(T)]|   &\leq  \bigg|\mathbb{E}\sum_{k=0}^{\chi -1}
     (u_{k+1}Y_{k+1} + Z_{k+1}-u^{'}_{k+1}Y^{'}_{k+1} -Z_{k+1}^{'})\bigg|  \\   &  \;\;\; + Ch^2 \mathbb{E}\sum_{k=0}^{\chi-1}Y_{k},
\end{align*}
where $C >0$ is independent of $h$. We split the term as follows:
\begin{align*}
  \sum_{k=0}^{\chi -1}  |u_{k+1}Y_{k+1} &+ Z_{k+1} - u_{k+1}^{'}Y_{k+1}^{'} - Z_{k+1}^{'}   |I_{\bar{G}^{c}}(X_{k+1}^{'}) 
    \\   & = |u_{k+1}Y_{k+1} + Z_{k+1} - u_{k+1}^{'}Y_{k+1}^{'} - Z_{k+1}^{'}   |I_{\bar{G}^{c}}(X_{k+1}^{'})I(t^{'}_{k+1} \geq T)   \\  
    & \quad + \sum_{k=0}^{\chi -1} |u_{k+1}Y_{k+1} + Z_{k+1} - u_{k+1}^{'}Y_{k+1}^{'} - Z_{k+1}^{'}   |I_{\bar{G}^{c}}(X_{k+1}^{'})I(t^{'}_{k+1} < T) 
    \\   & = |u_{k+1}Y_{k+1} + Z_{k+1} - u_{k+1}^{'}Y_{k+1}^{'} - Z_{k+1}^{'}   |I_{\bar{G}^{c}}(X_{k+1}^{'})I(t^{'}_{k+1} \geq T)   \\  
    & \quad + \sum_{k=0}^{\chi -2} |u_{k+1}Y_{k+1} + Z_{k+1} - u_{k+1}^{'}Y_{k+1}^{'} - Z_{k+1}^{'}   |I_{\bar{G}^{c}}(X_{k+1}^{'})I(t^{'}_{k+1} < T) I(t_{k+1}^{''}< T) 
    \\  
    & \quad + |u_{k+1}Y_{k+1} + Z_{k+1} - u_{k+1}^{'}Y_{k+1}^{'} - Z_{k+1}^{'}   |I_{\bar{G}^{c}}(X_{k+1}^{'})I(t^{'}_{k+1} < T) I(t_{k+1}^{''} \geq T). 
\end{align*}
Using Lemma~\ref{sticky_lemma_4.2}, Lemma~\ref{sticky_lemma_4.3} and Lemma~\ref{sticky_lemma_4.4}, we get  
\begin{align*}
     \sum_{k=0}^{\chi -1}  |u_{k+1}Y_{k+1} + Z_{k+1} &- u_{k+1}^{'}Y_{k+1}^{'} - Z_{k+1}^{'}   |I_{\bar{G}^{c}}(X_{k+1}^{'})  \leq ChY_{\chi-1} + Ch\sum_{k=0}^{\chi -2}r_{k+1}I_{\bar{G}^c}(X_{k+1}^{'})Y_{k}.
\end{align*}
Therefore, 
\begin{align*}
      |\mathbb{E}\varphi (X_\chi)Y_{\chi} + Z_{\chi} - \mathbb{E}[\varphi(X(T)) Y(T) + Z(T)]|   \leq  Ch \mathbb{E} Y_{\chi-1} + Ch\mathbb{E}\sum_{k=0}^{\chi -2} r_{k+1}Y_{k}I_{\bar{G}^c}(X_{k+1}^{'})
       + Ch^2 \mathbb{E}\sum_{k=0}^{\chi-1}Y_{k} .
\end{align*}
From \eqref{sticky_eulerscheme_y}, \eqref{sym_y}, \eqref{sticky_eqn_3.14} and \eqref{sticky_euler_eqn_3.17}, it is not difficult to infer that there exist $K, \tilde K >0$ independent of $h$ such that
\begin{align*}
    Y_{k} &\leq Y_{k-1} ( 1 + Kr_{k} + \tilde{K} h)  \leq Y_{k-1} ( 1 + Kr_{k}I_{\bar{G}^c}(X_{k}^{'}) )(1 + \tilde{K} h) \leq
    Y_{0} e^{\tilde{K} T}\prod_{i=0}^{k-1}(1+ K r_{i}
I_{\bar{G}^c}(X_{i}^{'})).\end{align*}
This implies
\begin{align*}
      |\mathbb{E}\varphi (X_\chi)Y_{\chi} + Z_{\chi} - \mathbb{E}[\varphi(X(T))Y(T) + Z(T)]|  &  \leq  Ch \mathbb{E} \prod_{k=0}^{\chi-2}Y_{k} + Ch^2 \mathbb{E}\sum_{k=0}^{\chi-1}\prod_{i=0}^{k-1}(1+ K r_{i}I_{\bar{G}^c}(X_{i}^{'})) 
     \\  &  \quad  +  Ch\mathbb{E}\sum_{k=0}^{\chi -2} r_{k+1}I_{\bar{G}^c}(X_{k+1}^{'})\prod_{i=0}^{k-1}(1+ K r_{i} I_{\bar{G}^c}(X_{i}^{'}))
\end{align*}
which on applying Lemma~\ref{sticky_lemma_avgreflection} and Corollary~\ref{sticky_lemma_avgreflection_corollary1} give the first-order of convergence.  
 \end{proof}

\section{Convergence of Projected Euler scheme for sticky SDEs}\label{sticky_proj_euler_section}

We first state the main theorem of convergence for projected Euler scheme i.e. Algorithm~\ref{proj_euler_method_stk_diff_algo}. 

\begin{theorem}\label{proj_conv_thrm}
Under Assumptions~\ref{boundary_assum}-\ref{ellip_assum}, the weak order of convergence of  \textbf{Algorithm~\ref{proj_euler_method_stk_diff_algo}} is $\mathcal{O}(h^{1/2})$ i.e. 
\begin{align}
|\mathbb{E} (\varphi(X_{\chi})Y_{\chi} + Z_{\chi}) - u(t_{0}, X_{0})| \leq Ch^{1/2}, 
\end{align}
where $ u(t,x) $ is solution of (\ref{wbp1})-(\ref{wbp3}), and $C $ is a positive constant independent of $h$. 
\end{theorem}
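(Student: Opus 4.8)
The plan is to follow the same telescoping strategy used for Theorem~\ref{thrm_sticky_euler_conv}, the essential difference being that projection onto $\partial G$ (rather than symmetrization about it) produces a larger one-step residual at the boundary, which is exactly what degrades the rate from $1$ to $1/2$. Writing $u_k=u(t_k,X_k)$ and using the terminal condition $\varphi=u(T,\cdot)$ together with $u_0Y_0+Z_0=u(t_0,x)$, I would decompose
\[
\mathbb{E}\big(\varphi(X_\chi)Y_\chi+Z_\chi\big)-u(t_0,x)=\mathbb{E}\big(u(T,X_\chi)-u(t_\chi,X_\chi)\big)Y_\chi+\mathbb{E}\sum_{k=0}^{\chi-1}\big(u_{k+1}Y_{k+1}+Z_{k+1}-u_kY_k-Z_k\big),
\]
and then split the sum according to whether step $k$ is an interior Euler step ($X_k\in G$) or a boundary projection step ($X_k\in\bar G^c$). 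Because a boundary step advances time by $h_k=r_k\mu(X_{k+1})=\mathcal{O}(h^{1/2})$, the final step may overshoot $T$; since $u$ is Lipschitz in $t$, the first term is $\mathcal{O}(|t_\chi-T|)\,\mathbb{E}Y_\chi=\mathcal{O}(h^{1/2})$. This is the first place the half-order enters, and it is harmless, so no final-step correction is needed, in contrast with Cases IIIb and IV of Algorithm~\ref{sticky_euler_method}.

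For the interior steps I would invoke the standard weak-Euler estimate \eqref{one_step_sticky_inside _domain}, namely $\mathbb{E}\big(u_{k+1}Y_{k+1}+Z_{k+1}-u_kY_k-Z_k\mid X_k\big)\le CY_kh^2$, which remains valid using the smooth extension of $u$ to $\bar Q\cup\bar Q_{-r}$ since the residual $\partial_t u+\mathcal{A}u+cu+g=0$ is evaluated at the interior point $(t_k,X_k)$. For a boundary step I would Taylor-expand around the projection point $X^\pi:=X_{k+1}\in\partial G$, writing $X_k=X^\pi-r_k\nu(X^\pi)$ and $t_{k+1}=t_k+r_k\mu(X^\pi)$, substitute $\partial_t u=-(\mathcal{A}u+cu+g)$ from \eqref{wbp1}, and insert the updates \eqref{proj_y}--\eqref{proj_z}. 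Collecting terms, the coefficient of $r_kY_k$ equals $(\nu\cdot\nabla u)-\mu\mathcal{A}u+\gamma u-\psi$ evaluated on $\partial G$, which vanishes identically by the sticky boundary condition \eqref{wbp3} (recall $\varrho\equiv1$). Hence the one-step boundary residual is
\[
\big|u_{k+1}Y_{k+1}+Z_{k+1}-u_kY_k-Z_k\big|\,I_{\bar G^c}(X_k)\le Cr_k^2Y_k\le Ch^{1/2}r_kY_k\quad\text{a.s.},
\]
the crucial point being that projection cancels only the $\mathcal{O}(r_k)$ term, leaving an $\mathcal{O}(r_k^2)$ error, whereas symmetrization in Algorithm~\ref{sticky_euler_method} removed the next order as well.

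To accumulate these errors I would establish the stability bound $Y_k\le Y_0e^{\tilde KT}\prod_{i<k}\big(1+Kr_iI_{\bar G^c}(X_i)\big)$ from \eqref{proj_y}, together with an analog of Lemma~\ref{sticky_lemma_avgreflection} and Corollary~\ref{sticky_lemma_avgreflection_corollary1} for the projected chain. The discrete boundary-value-problem comparison using the operator $\mathbb{T}$ and the test function $W(t,x)=e^{K_1(T-t)}e^{K_2B(x)}$ carries over essentially verbatim, since the only features it uses are that a boundary step moves the point an $\mathcal{O}(h^{1/2})$ distance inward while advancing time by $r_k\mu(X^\pi)$ with $\mu\ge\mu_0>0$. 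These yield $\mathbb{E}\sum_k r_kI_{\bar G^c}(X_k)\prod_{i<k}(1+Kr_i)\le C$ and $\mathbb{E}\sum_k\prod_{i<k}(1+Kr_i)\le C/h$.

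Combining the pieces, the interior contribution is bounded by $Ch^2\,\mathbb{E}\sum_kY_k\le Ch^2\cdot C/h=Ch$, while the boundary contribution is bounded by $C\,\mathbb{E}\sum_k r_k^2Y_kI_{\bar G^c}(X_k)\le Ch^{1/2}\,\mathbb{E}\sum_k r_kY_kI_{\bar G^c}(X_k)\le Ch^{1/2}$; together with the $\mathcal{O}(h^{1/2})$ final-step term this gives the claimed rate. The main obstacle is the boundary one-step computation: verifying the exact cancellation of the $\mathcal{O}(r_k)$ term through \eqref{wbp3}, and then confirming that the surviving $\mathcal{O}(r_k^2)$ residual, once summed against the $\mathcal{O}(h^{-1/2})$ expected number of boundary steps encoded in the average-hit estimate, produces precisely $\mathcal{O}(h^{1/2})$ and no worse. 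The final-step overshoot must likewise be checked to contribute only at this same order, so that half-order is both attained and not improvable by this projection construction.
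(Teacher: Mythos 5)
Your proposal matches the paper's proof essentially step for step: the same telescoping decomposition split into interior and boundary steps, the same one-step boundary lemma obtained by Taylor-expanding about the projection point and cancelling the $\mathcal{O}(r_k)$ term via the sticky boundary condition \eqref{wbp3} (leaving an $\mathcal{O}(r_k^2)$ residual, which is the paper's Lemma~\ref{proj_euler_one_step}), the same stability bound on $Y_k$, and the same average-hit estimates carried over from Lemma~\ref{sticky_lemma_avgreflection} and Corollary~\ref{sticky_lemma_avgreflection_corollary1}, combined identically to give $Ch^2\cdot h^{-1}+Ch^{1/2}$. Your explicit treatment of the final-step overshoot $t_\chi>T$ as an $\mathcal{O}(h^{1/2})$ Lipschitz-in-time error is slightly more careful than the paper, which handles this point only in a remark in Section~\ref{subsection3.1}, but it is the same idea.
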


We now prove one-step lemma for error near the boundary. 

\begin{lemma}[One-step lemma for Algorithm~\ref{proj_euler_method_stk_diff_algo}]\label{proj_euler_one_step}
Let Assumptions~\ref{boundary_assum}-\ref{ellip_assum} hold. Then the one-step error of Algorithm~\ref{proj_euler_method_stk_diff_algo} when $X_{k} \notin \bar{G}$ is given by
\begin{align*}
|u_{k+1}Y_{k+1} + Z_{k+1} - u_{k}Y_{k} - Z_{k}| \leq Cr_{k+1}^{2}Y_{k},\;\;\; k=0,\dots,\chi-2,\;\;\; a.s.
\end{align*}
where $C$ is a positive constant independent of $h$ and $Y_k$. 
\end{lemma}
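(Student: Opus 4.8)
The plan is to anchor every Taylor expansion at the boundary point $z := X_{k+1}$, the projection of $X_k$ onto $\partial G$, and to exploit the design of the $Y$- and $Z$-updates so that every term of order $r_k := \dist(X_k,\partial G)$ is annihilated by the sticky boundary condition $\mathscr{L}u=\psi$, leaving only a genuinely second-order remainder. Since $X_k = z - r_k\nu(z)$ sits at normal distance $r_k=\mathcal{O}(h^{1/2})$ from $z$, and $t_{k+1}=t_k+h_k$ with $h_k=r_k\mu(z)=\mathcal{O}(r_k)$, I would first write
\begin{align*}
u_k = u(t_k,z) - r_k(\nabla u(t_k,z)\cdot\nu(z)) + \mathcal{O}(r_k^2), \qquad u_{k+1} = u(t_k,z) + h_k\tfrac{\partial u}{\partial t}(t_k,z) + \mathcal{O}(r_k^2),
\end{align*}
using the $C^{2,4}$ extension of $u$ to $\bar G\cup\bar G_{-r}$ recorded in Subsection~\ref{sticky_subsec_one_step_lemmas} to justify evaluating $u$ at the exterior point $X_k$. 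All coefficients and derivatives are uniformly bounded, so the $\mathcal{O}(r_k^2)$ remainders are of the asserted form once multiplied by $Y_k$.

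Next I would combine these with the exact update $Y_{k+1}=Y_k(1+r_k\gamma+h_k c)$, writing $\gamma=\gamma(t_k,z)$, $c=c(t_k,z)$, and collect the terms linear in $r_k$ in $u_{k+1}Y_{k+1}-u_kY_k$; all cross-products such as $r_k\gamma\cdot h_k\partial_t u$ are $\mathcal{O}(r_k^2)$. This yields
\begin{align*}
u_{k+1}Y_{k+1} - u_kY_k = Y_k\Big(r_k\big[\gamma\,u + (\nabla u\cdot\nu)\big] + h_k\big[\tfrac{\partial u}{\partial t} + c\,u\big]\Big) + \mathcal{O}(r_k^2 Y_k).
\end{align*}
The crucial step is to invoke the interior equation \eqref{wbp1} at the boundary point, $\tfrac{\partial u}{\partial t} = -\mathcal{A}u - c\,u - g$, which is legitimate since \eqref{wbp1} holds on all of $[0,T]\times\bar G$; substituting and using $h_k=r_k\mu$ collapses the time-part bracket to $-r_k\mu(\mathcal{A}u+g)$, so the first-order contribution becomes $r_k\big(\gamma u + (\nabla u\cdot\nu) - \mu\mathcal{A}u\big)Y_k - r_k\mu g\,Y_k$.

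Finally, with $\varrho\equiv 1$ the sticky boundary condition \eqref{wbp3} reads $-\mu\mathcal{A}u + (\nabla u\cdot\nu) + \gamma u = \psi$ on $\partial G$, so the parenthesis equals $\psi=\psi(t_k,z)$ and the first-order part reduces to $(r_k\psi - r_k\mu g)Y_k$. Adding the exact increment $Z_{k+1}-Z_k = (-r_k\psi + r_k\mu g)Y_k$ from \eqref{proj_z} cancels this contribution identically, and only the $\mathcal{O}(r_k^2 Y_k)$ remainders survive — the claimed estimate, the step distance denoted $r_{k+1}$ in the statement being the projection distance $r_k$ appearing in \eqref{proj_x}.

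The main obstacle is precisely this exact cancellation of the $\mathcal{O}(r_k)$ terms: it is what forces the particular form of the $Y$- and $Z$-increments \eqref{proj_y}-\eqref{proj_z}, and it hinges on the PDE substitution producing the operator $\mathcal{A}u$ that matches term (I) of $\mathscr{L}u$. The routine part — bounding the neglected third-order Taylor terms and all $r_k\times h_k$ cross terms by $Cr_k^2 Y_k$ — follows from Assumptions~\ref{boundary_assum}-\ref{ellip_assum}, the uniform bounds on $u$ and its derivatives on $\bar G\cup\bar G_{-r}$, and $h_k=\mathcal{O}(r_k)$; care is needed only to evaluate every coefficient consistently at $(t_k,z)$.
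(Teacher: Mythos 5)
Your proposal is correct and follows essentially the same route as the paper's proof: Taylor expansion anchored at the projection point $X_{k+1}\in\partial G$ (in time for $u_{k+1}$, in the normal direction for $u_k$), substitution of the interior equation \eqref{wbp1} at the boundary point to trade $\partial u/\partial t$ for $-\mathcal{A}u - cu - g$, invocation of the sticky boundary condition \eqref{wbp3} with $\varrho\equiv 1$ to collapse the first-order bracket to $\psi$, and exact cancellation against the $Z$-increment \eqref{proj_z}, leaving an $\mathcal{O}(r_k^2 Y_k)$ remainder. The only cosmetic difference is that you expand $u_{k+1}Y_{k+1}-u_kY_k$ directly while the paper first splits it as $(u_{k+1}-u_k)Y_{k+1}+u_k(Y_{k+1}-Y_k)$, and you correctly resolve the paper's index slip between $r_{k+1}$ in the lemma statement and $r_k$ in the update rules.
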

\begin{proof}
We split the terms as follows: 
\begin{align}
u_{k+1}Y_{k+1} + Z_{k+1} - u_{k}Y_{k} - Z_{k} = (u_{k+1} - u_{k})Y_{k+1} + u_{k}(Y_{k+1} - Y_{k}) + Z_{k+1} - Z_{k}.   
\end{align}
Using Taylor's expansion, we obtain
\begin{align*}
u_{k+1} = u(t_{k}, X_{k+1}) + h_{k}\frac{\partial u}{\partial t} (t_{k}, X_{k+1})  + \frac{h_{k}^{2}}{2}\frac{\partial^2}{\partial t^2} u(t_{k} + \epsilon_{1} h_{k}, X_{k+1}),
\end{align*}
where $\epsilon_1 \in (0, 1)$ and  recall $h_k = \mu(X_{k+1})r_{k}$. Again, using Taylor's expansion, we ascertain
\begin{align*}
u_{k} & = u(t_k, X_{k+1} -r_{k}\nu_{k+1}) = u(t_{k}, X_{k+1}) - r_{k} (\nabla u(t_{k}, X_{k+1}) \cdot \nu_{k+1}) \\ &  \quad + \frac{r_{k}^{2}}{2}D^{2}u(t_{k}, X_{k+1} - \epsilon_{2}r_{k}\nu_{k+1})[\nu_{k+1}, \nu_{k+1}], 
\end{align*}
where $\epsilon_{2} \in (0,1)$ and $\nu_{k+1} := \nu(X_{k+1})$. Hence,
\begin{align}
(u_{k+1}&-u_{k})Y_{k+1} = \Big(h_{k}\frac{\partial u}{\partial t}(t_{k}, X_{k+1})  + \frac{h_{k}^{2}}{2}\frac{\partial^2}{\partial t^2} u(t_{k} + \epsilon_{1} h_{k}, X_{k+1}) + r_{k} (\nabla u(t_{k}, X_{k+1}) \cdot \nu_{k+1}) \nonumber \\ &- \frac{r_{k}^{2}}{2}\nabla^{2}u(t_{k}, X_{k+1} - \epsilon_{2}r_{k}\nu_{k+1})[\nu_{k+1}, \nu_{k+1}]\Big)(Y_{k} + r_{k}c(t_{k}, X_{k+1})Y_{k} + r_{k}\gamma(t_{k}, X_{k+1})Y_{k}\Big). \label{stk_eq_4.3}
\end{align}
Since $X_{k+1} \in \bar{G}$, using (\ref{wbp1}), we  have
\begin{align}
    \frac{\partial}{\partial t}u(t_{k}, X_{k+1}) & = - (b(t_{k}, X_{k+1}) \cdot \nabla u(t_{k}, X_{k+1})) - \big(a(t_{k}, X_{k+1}) : \nabla^2 u(t_{k}, X_{k+1}) \big)\nonumber \\ & \;\;\;\; - c(t_{k},X_{k+1})u(t_{k}, X_{k+1}) - g(t_{k}, X_{k+1}). \label{stk_eq_4.4}
\end{align}
Combining (\ref{stk_eq_4.3}) and (\ref{stk_eq_4.4}) yields
\begin{align}
(u_{k+1}&-u_{k})Y_{k+1} = \bigg(-r_{k}\mu(X_{k+1})\Big((b(t_{k}, X_{k+1}) \cdot \nabla u(t_{k}, X_{k+1})) - \big(a(t_{k}, X_{k+1}) : \nabla^2 u(t_{k}, X_{k+1})\big) \Big) \nonumber \\ &  \;\;\; \;+  r_{k}Du(t_{k}, X_{k+1})[\nu_{k+1}]\bigg)Y_{k} - r_{k}\mu(X_{k+1})c(t_{k}, X_{k+1})u(t_{k}, X_{k+1})Y_{k} \nonumber  \\&  \;\;\; \; - r_{k}\mu(X_{k+1})g(t_{k}, X_{k+1})Y_{k} + \mathcal{O}(r_{k}^{2}Y_{k})
\nonumber\\ &  = -r_{k}\gamma(t_{k}, X_{k+1})u(t_{k}, X_{k+1})Y_{k} + r_{k}\psi(t_{k}, X_{k+1})Y_{k}- r_{k}\mu(X_{k+1})c(t_{k}, X_{k+1})u(t_{k}, X_{k+1})Y_{k} \nonumber \\ &  \;\;\; \; - r_{k}\mu(X_{k+1})g(t_{k}, X_{k+1})Y_{k} + \mathcal{O}(r_{k}^{2}Y_{k}), \label{proj_eqn_4.5}
\end{align}
where we have used (\ref{wbp3}). From (\ref{proj_y}), we have
\begin{align}
u_{k}(Y_{k+1} - Y_{k}) &= (u(t_{k},X_{k+1}) - r_{k}Du(t_{k}, X_{k+1} - \epsilon_{3}r_{k}\nu_{k+1})[\nu_{k+1}]) \nonumber  \\ & \;\;\;\; \times \big(r_{k}\gamma(t_{k}, X_{k+1})Y_{k} + r_{k}c(t_{k}, X_{k+1})Y_{k}\big), \label{proj_eqn_4.6}
\end{align}
where $\epsilon_3 \in (0,1)$. Adding (\ref{proj_eqn_4.5}) and (\ref{proj_eqn_4.6}) gives
\begin{align}
u_{k+1} Y_{k+1} -  u_{k}Y_{k} = r_{k}\psi(t_{k},X_{k+1})Y_{k} - r_{k}\mu(X_{k+1})g(t_{k},X_{k+1})Y_{k} +\mathcal{O}(r_{k}^2 Y_k), 
\end{align}
 which on using the expression for $(Z_{k+1} - Z_{k})$ from (\ref{proj_z}) gives the desired bound.
\end{proof}

\begin{proof}[Proof of Theorem~\ref{proj_conv_thrm}]
We have
\begin{align*}
&\mathbb{E}(\varphi(X_{\chi})Y_{\chi}  + Z_{\chi} - u(t_{0}, x)) = \mathbb{E}\Big(\sum\limits_{k=0}^{\chi -1 }u_{k+1}Y_{k+1} + Z_{k+1} - u_{k}Y_{k} - Z_{k})\Big) \\ & = \mathbb{E}\Big(\sum\limits_{k=0}^{\chi -1 }\big(u_{k+1}Y_{k+1} + Z_{k+1} - u_{k}Y_{k} - Z_{k})\big)I_{\bar{G}}(X_{k})\Big)  
+ \mathbb{E}\Big(\sum\limits_{k=0}^{\chi -1 }\big(u_{k+1}Y_{k+1} + Z_{k+1} - u_{k}Y_{k} - Z_{k})\big)I_{\bar{G}^{c}}(X_{k})\Big).
\end{align*}
 Note that due to uniform boundedness of the terms involved inside summation, we have
\begin{align*}
\mathbb{E}&\Big(\sum\limits_{k=0}^{\chi -1 }\big(u_{k+1}Y_{k+1} + Z_{k+1} - u_{k}Y_{k} - Z_{k})\big)I_{\bar{G}}(X_{k})\Big)  = \sum\limits_{k=0}^{\infty }\mathbb{E}\big((u_{k+1}Y_{k+1} + Z_{k+1} - u_{k}Y_{k} - Z_{k})I_{\bar{G}}(X_{k})\big),
\end{align*}
where we assign $X_{k+1} = X_{k}$, $t_{k+1} = t_{k}$, $Y_{k+1} = Y_{k}$, and $Z_{k+1} = Z_{k}$ when $k > \chi$.
From \cite{leimkuhler2023simplerandom}, we know that $
 \mathbb{E}\big(u_{k+1}Y_{k+1} + Z_{k+1} - u_{k}Y_{k} - Z_{k} | X_{k} \in \bar{G}\big) \leq CY_{k}h^{2}
$. Therefore,
\begin{align*}
\mathbb{E}&\big((u_{k+1}Y_{k+1} + Z_{k+1} - u_{k}Y_{k}  - Z_{k})I_{\bar{G}}(X_{k})\big) = \mathbb{E}\Big( \mathbb{E}\big(u_{k+1}Y_{k+1} + Z_{k+1} - u_{k}Y_{k} - Z_{k} | X_{k} \in \bar{G}\big)I_{\bar{G}}(X_{k})\Big) \\ & = \mathbb{E}\Big( \mathbb{E}\big(u_{k+1}Y_{k+1} + Z_{k+1} - u_{k}Y_{k} - Z_{k} | X_{k} \in \bar{G}\big)I_{\bar{G}}(X_{k})I(\chi > k)\Big) \leq Ch^{2} \mathbb{E}(Y_{k}I_{\bar{G}}(X_{k})I(\chi > k) ),
\end{align*}
which implies 
\begin{align*}
 \sum\limits_{k=0}^{\infty }\mathbb{E}\big((u_{k+1}Y_{k+1} + Z_{k+1} - u_{k}Y_{k} - Z_{k})I_{\bar{G}}(X_{k})\big) \leq Ch^{2} \mathbb{E}\sum\limits_{k=0}^{\infty}(Y_{k}I_{\bar{G}}(X_{k})I(\chi > k) ) \leq  Ch^{2} \mathbb{E}\Big(\sum\limits_{k=0}^{\chi-1}Y_{k}\Big).
\end{align*}
From Lemma~\ref{proj_euler_one_step}, we have
\begin{align}
|u_{k+1}Y_{k+1} + Z_{k+1} - u_{k}Y_{k} - Z_{k}|I_{\bar{G}^{c}}(X_{k}) \leq Cr_{k}^{2}Y_{k} I_{\bar{G}^{c}}(X_{k}),
\end{align}
which implies
\begin{align}
 \mathbb{E}\Big(\sum\limits_{k=0}^{\chi -1 }\big(u_{k+1}Y_{k+1} + Z_{k+1} - u_{k}Y_{k} - Z_{k})\big)I_{\bar{G}^{c}}(X_{k})\Big) \leq  Ch^{1/2}\mathbb{E}\Big(\sum\limits_{k=0}^{\chi - 1}r_{k}Y_{k}I_{\bar{G}^{c}}(X_{k})\Big). 
\end{align}
Using \eqref{stickyproj_eulerscheme_y} and \eqref{proj_y}, we can deduce that
\begin{align*}
    Y_{k} \leq  Y_{k-1}(1 + K r_{k-1}I_{\bar{G}^c}(X_{k-1}))(1+ \tilde{K}h)  = Y_{0}e^{\tilde{K} T}\prod\limits_{i=0}^{k}(1 + K r_{i} I_{\bar{G}^c}(X_{i})),
\end{align*}
where $K, \tilde{K}>0$ are independent of $ h$.  Therefore,
\begin{align}
\mathbb{E}(\varphi(X_{\chi})Y_{\chi}  + Z_{\chi} - u(t_{0}, x)) &\leq Ch^{2} \mathbb{E}\sum_{k=0}^{\chi}\prod\limits_{i=0}^{k}(1 + K r_{i} I_{\bar{G}^c}(X_{i}))  
\nonumber \\   & \quad 
+ Ch^{1/2} \mathbb{E}\sum_{k=0}^{\chi}r_{k}I_{\bar{G}^{c}}(X_{k})\prod\limits_{i=0}^{k}(1 + K r_{i} I_{\bar{G}^c}(X_{i})). \label{sticky_new_eq_5.10}
\end{align}
Using the same set of arguments as in Lemma~\ref{sticky_lemma_avgreflection} and Corollary~\ref{sticky_lemma_avgreflection_corollary1}, it can be shown that
\begin{align*} 
\mathbb{E}\sum_{k=0}^{\chi}r_{k}I_{\bar{G}^{c}}(X_{k})\prod\limits_{i=0}^{k}(1 + K r_{i} I_{\bar{G}^c}(X_{i})) \leq C, \quad \text{and}\quad \mathbb{E}\sum_{k=0}^{\chi}\prod\limits_{i=0}^{k}(1 + K r_{i} I_{\bar{G}^c}(X_{i})) \leq C/h, 
\end{align*}
where $ C >0 $ is independent of $h$. Using the above bounds in \eqref{sticky_new_eq_5.10}, we get the desired result. 
\end{proof}

\section{Numerical Illustration}\label{sticky_sec_num_exp}

This section verifies the theoretical results for the numerical schemes presented in Section~\ref{sticky_num_method_section}   for finite-time weak-sense accuracy. 
 For the Monte Carlo approximation of $\mathbb{E}\varphi(X_{N})$ we use $\hat{\varphi}_{N} = \frac{1}{M}\sum_{i=1}^{M} \varphi(X^{(i)}_{N})$ where $X^{(i)}_{N}$ represent
 independent samples of the numerical solution.  Finally, ${D}_{M}$
denotes the usual sample variance associated with our Monte Carlo estimators.

For the considered numerical illustration, take $G = \{x_1 ^2 + x_2^2 \leq R^2\}$ with $\partial G = \{ x_1^2 + x_2^2 = R^2 \}$. Consider the following parabolic PDE:
\begin{align}
    \frac{\partial u}{\partial t} + \frac{x_1}{2} \frac{\partial u}{\partial x_1} + 2x_2\frac{\partial u}{\partial x_2} + \frac{1}{2}\frac{\partial^2 u}{\partial x_1^2 } + \frac{3 x_2^2}{2}\frac{\partial^2 u}{\partial x_2^2 }  + x_2 u - (1  + 2x_1^2 + 8x_2^2 + x_1^2 x_2 &+ x_2^3)e^{-(1-t)} - 10x_2 = 0, \nonumber \\  & (t,x) \in [0,1]\times \bar{G},  \nonumber
\end{align}
with sticky boundary condition:
\begin{align}
   2x_1^2 \bigg(\frac{x_1}{2} \frac{\partial u}{\partial x_1} + 2x_2\frac{\partial u}{\partial x_2} + \frac{1}{2}\frac{\partial^2 u}{\partial x_1^2 } + \frac{3 x_2^2}{2}\frac{\partial^2 u}{\partial x_2^2 } \bigg) - 0.5 u - (\nabla u \cdot x)/R & =  -5 - \frac{1}{10}x_1^2 - \frac{21}{10}x_2^2 + 2x_1^4 + 14x_1^2 x_2^2, \nonumber   \\ &   (t,x) \in [0,1]\times \partial G, \nonumber 
\end{align}
and terminal condition:
\begin{align*}
    u(1,x) = x_1^2 + x_2^2 + 10, \quad x \in \bar{G}. 
\end{align*}
The construction of the model problem in this experiment follows the same path as
in \cite{milstein2003simplest_dirichlet} (see also \cite{mil_tretyakov_book}). 
True solution is $\exp(-(1-t))(x_1^2 + x_2^2 )  + 10$. We evaluate the solution at  $(x_1, x_2) = (0,1)$, and therefore the   value is $10.367879 $ (6 dp). The "Error" column in Tables~\ref{sticky_euler_conv_normal_increments_table} and \ref{sticky_projected_euler_conv_normal_increments_table} as well as convergence plots in Figure~\ref{sticky_euler_conv_plot_1_normal_increments} confirm the results of Theorem~\ref{thrm_sticky_euler_conv} and Theorem~\ref{proj_conv_thrm}. 

\begin{table}[htbp]
\centering
\caption{Sticky Euler Scheme Convergence Results}
\label{sticky_euler_conv_normal_increments_table}
\begin{tabular}{c|c|c|c|c}
$h$ & $M$ &$\hat{\varphi}_{M}$ & Error & Avg Hit  \\
\hline
0.125 &  $5 \times 10^5$  & 10.649745 $\pm$ 0.00590 & 0.281866 & 1.92  \\
\hline
0.1 & $5 \times 10^5$ & 10.602825 $\pm$ 0.00550 & 0.234946 & 2.11  \\
\hline
0.0625 &  $5 \times 10^5$ & 10.525758 $\pm$ 0.00469 & 0.157878 & 2.56  \\
\hline
0.05 & $10^6$  & 10.495482 $\pm$ 0.003149  &  0.127603  & 2.81  \\
\hline
0.03125 & $10^6$ & 10.449102 $\pm$ 0.002912 & 0.081223  & 3.46  \\
\hline
0.025 & $10^6$ & 10.430940 $\pm$ 0.002831 & 0.063060   & 3.82  \\
\hline
0.0125 & $10^6$ & 10.397215 $\pm$ 0.002671  &  0.029335   & 5.22  \\
\end{tabular}
\end{table}

\begin{table}[htbp]
\centering
\caption{Projection Scheme Convergence Results}
\label{sticky_projected_euler_conv_normal_increments_table}
\begin{tabular}{c|c|c|c|c}
h & M &$\hat{\varphi}_{M}$ & Error & Avg Hit \\
\hline
0.125 & $5 \times 10^4$ & 6.699937 $\pm$ 0.01673 & 3.667943 & 2.28 \\
\hline
0.1 & $5 \times 10^4$ & 7.458848 $\pm$ 0.01280 & 2.909031 & 2.51 \\
\hline
0.0625 & $5 \times 10^4$ & 8.541244 $\pm$ 0.00899 & 1.826636 & 2.89 \\
\hline
0.05 & $5 \times 10^5$ & 8.843772 $\pm$ 0.00263 & 1.524107 & 3.07  \\
\hline
0.03125 & $5 \times 10^5$ & 9.248428 $\pm$ 0.00248 & 1.119451 & 3.44\\
\hline
0.025 & $5 \times 10^5$ & 9.369810 $\pm$ 0.00246 & 0.998069 & 3.60 \\
\hline
0.0125 & $5 \times 10^5$ & 9.611287 $\pm$ 0.00243 & 0.756592 & 4.06 \\
\end{tabular}
\end{table}
\vspace{-5pt}
\begin{figure}[htbp]
    \centering
    \includegraphics[width=0.6\textwidth]{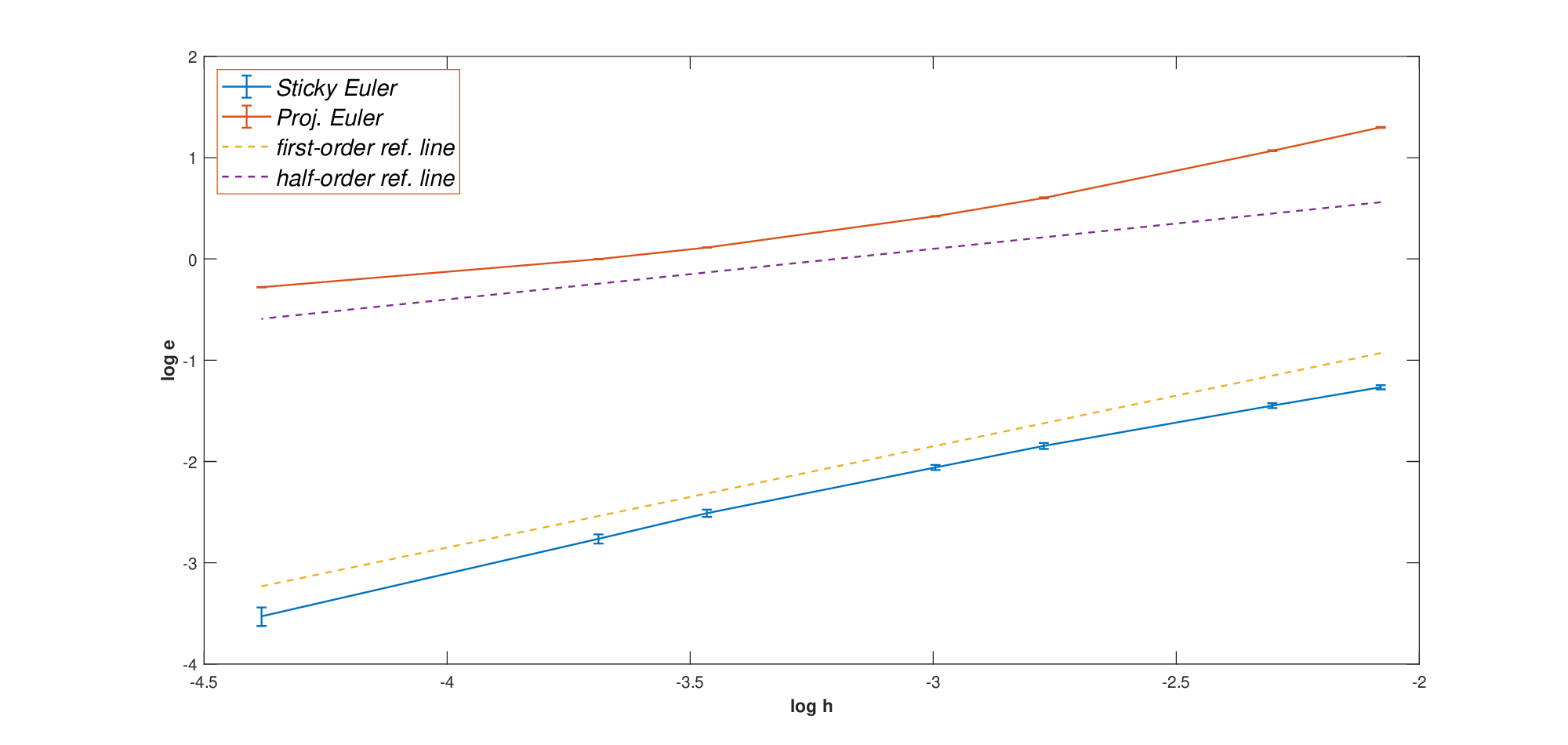}
    \caption{Plot confirming rate of convergence of Sticky Euler and Projected Euler schemes. }
    \label{sticky_euler_conv_plot_1_normal_increments}
\end{figure}

{
\small

\bibliography{references}
\bibliographystyle{abbrv}
}
\end{document}